\documentclass[11pt]{amsart}
\usepackage{amssymb,amsmath,amsthm,enumitem,tikz,graphicx}
\setlength{\topmargin}{-0.5cm}
\setlength{\topskip}{0cm}
\setlength{\parskip}{1pt}
\setlength{\oddsidemargin}{0.0cm}
\setlength{\evensidemargin}{0.0cm}
\setlength{\textwidth}{16.4cm}
\setlength{\textheight}{22.5cm}
\setlength{\headsep}{0.8cm}
\setlength{\jot}{4.5pt}
\setlength{\footskip}{1cm}

\theoremstyle{plain}
\newtheorem{theorem}{Theorem}[section]
\newtheorem{corollary}[theorem]{Corollary}
\newtheorem{lemma}[theorem]{Lemma}

\theoremstyle{remark}
\newtheorem*{remark}{Remark}
\newtheorem*{remarks}{Remarks}

\newcommand{\CC}{{\mathbb C}}
\newcommand{\RR}{{\mathbb R}}

\renewcommand{\Re}{\operatorname{Re}}

\begin{document}

\title{Norms of polynomials of the Volterra operator}

\date{29 July 2022}

\author{Thomas Ransford}
\address{D\'epartement de math\'ematiques et de statistique, Universit\'e Laval,
Qu\'ebec  (QC)  G1V 0A6, Canada}
\email{thomas.ransford@mat.ulaval.ca}

\author{Nathan Walsh}
\address{D\'epartement de math\'ematiques et de statistique, Universit\'e Laval, Qu\'ebec (QC) G1V 0A6, Canada}
\email{nathan.walsh.2@ulaval.ca}

\thanks{First author supported by grants from NSERC and the Canada Research Chairs program.
Second author supported by an NSERC Undergraduate Student Research Award and an FRQNT Supplement}

\begin{abstract}
We compute the operator norm of real-quadratic polynomials of the Volterra operator.
This is used to test whether the Crouzeix conjecture holds for the Volterra operator.
\end{abstract}

\keywords{Volterra operator, operator norm, numerical range}

\subjclass[2010]{47G10, 47A12}

\maketitle


\section{Introduction}\label{S:intro}

Let $V: L^2[0,1]\to L^2[0,1]$ be the  Volterra operator, defined by
\[
Vf(x):=\int_0^x f(t)\,dt.
\]
It is well known that $V$ is a compact  quasi-nilpotent operator, and that its adjoint is given by
\[
V^*f(x)=\int_x^1 f(t)\,dt.
\]

Halmos \cite[Problem~188]{Ha82} computed the exact value of the operator norm of $V$. It is given by
\begin{equation}\label{E:Halmos}
\|V\|=2/\pi.
\end{equation}
In fact his method yields all the singular values of $V$. The basic idea is to convert the eigenvalue problem
$V^*Vf=\sigma^2f$ into a second-order ODE with two boundary conditions, which can then be solved explicitly.
We shall refer to this technique as the Halmos method.

Thorpe \cite{Th98} discussed how to extend Halmos's method to higher powers of $V$,
and computed $\|V^n\|$ numerically for certain values of $n$ as far as $n=20$.
The values of $\|V^n\|$ for $1\le n\le 10$ had also previously been computed by Lao and Whitley \cite{LW97},
who further conjectured that the powers of $V$ satisfy the asymptotic estimate $\|V^n\|\sim 1/(2n!)$ as $n\to\infty$.
This conjecture was proved by several people at around the same time: 
Thorpe \cite{Th98}, Little and Reade \cite{LR98}, and Kershaw \cite{Ke99}.
To our knowledge, the  best  upper and lower bounds for $\|V^n\|$ are due to
B\"ottcher and D\"orfler \cite{BD09}.

The problem of computing  $\|p(V)\|$ for more general polynomials $p$
was addressed by Lyubich and Tsedenbayar in \cite{LT10}.
They used the Halmos method to determine all the 
singular values of the linear polynomials $I+\nu V$, where $\nu\in\CC$.
They also explained how the method could be adapted, in principle, to treat polynomials of higher
degree, although they did not carry this  out in detail.

In this article, we begin in \S\ref{S:linear}  by revisiting the case of linear polynomials.
Our aim is to bring to light certain aspects that were not treated in \cite{LT10}.
Then, in \S\ref{S:quadratic}, we turn to quadratic polynomials. Here we carry out the 
program proposed in \cite{LT10} to compute the norm of $\|p(V)\|$ when $p$ is a real-quadratic polynomial.
The computations reveal several interesting aspects of these norms.

This research is motivated, in part, by a question posed to the first author by
Felix Schwenninger, as to whether the Crouzeix conjecture holds for the Volterra operator.
Namely, 
does every polynomial $p$ satisfy $\|p(V)\|\le 2\max_{W(V)}|p(z)|$, where $W(V)$ denotes the
numerical range of $V$?
In \S\ref{S:nr} we use our results from the preceding sections
to perform some numerical calculations to test the conjecture.


\section{Linear polynomials}\label{S:linear}

We begin by stating a formula for $\|p(V)\|$ when $p(z)=z+\mu$, where $\mu\in\CC$.

\begin{theorem}\label{T:linear}
Let $\mu=\alpha+i\beta\in\CC$. Then
\[
\|V+\mu I\|=\sqrt{\alpha^2+1/\rho^2},
\]
where $\rho$ is the unique positive number such that  
\[
\rho/(1-\beta^2\rho^2)\in(0,\pi)
\quad\text{and}\quad
\cot\Bigl(\frac{\rho}{1-\beta^2\rho^2}\Bigr)=\alpha\rho.
\]
\end{theorem}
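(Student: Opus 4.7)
The plan is to apply the Halmos method: $\|V+\mu I\|^2$ equals the largest eigenvalue $\sigma^2$ of $T^*T$ with $T:=V+\mu I$. Expanding
\[
T^*T = V^*V + \mu V^* + \bar\mu V + |\mu|^2 I,
\]
the equation $T^*Tf=\sigma^2 f$ reads, after setting $s^2:=\sigma^2-|\mu|^2$,
\[
V^*Vf(x) + \mu V^*f(x) + \bar\mu Vf(x) = s^2 f(x), \qquad x\in[0,1].
\]
Differentiating twice and using the fundamental theorem of calculus on each Volterra term converts this into the linear second-order ODE
\[
s^2 f''(x) + 2i\beta f'(x) + f(x) = 0.
\]
Boundary data come from evaluating the (once-differentiated) integral equation at the endpoints, where the relevant Volterra terms vanish. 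At $x=0$, where $Vf(0)=0$, one gets $s^2 f'(0) + 2i\beta f(0)=0$. At $x=1$, where $V^*f(1)=V^*Vf(1)=0$, the equation yields $\bar\mu Vf(1)=s^2 f(1)$, which combined with the first-derivative identity $s^2 f'(1)=-Vf(1)-2i\beta f(1)$ gives $f'(1)+(1/\bar\mu + 2i\beta/s^2)f(1)=0$.

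Next I would integrate the ODE. The substitution $f(x)=e^{-i\beta x/s^2}u(x)$ eliminates the first-order term and reduces it to $u''+\theta^2 u=0$, where $\theta:=\sqrt{\beta^2+s^2}/s^2$. Writing $u(x)=A\cos(\theta x)+B\sin(\theta x)$, the boundary condition at $0$ forces $B=-i\beta A/(s^2\theta)$; inserting this and the boundary condition at $1$ into each other and simplifying should collapse everything to the single transcendental equation
\[
\cot\theta = \frac{\alpha}{s^2\theta}.
\]
Setting $\rho:=1/\sqrt{\sigma^2-\alpha^2}$, a short calculation gives $s^2=(1-\beta^2\rho^2)/\rho^2$, $\theta=\rho/(1-\beta^2\rho^2)$, and $s^2\theta=1/\rho$, so the equation becomes precisely $\cot(\rho/(1-\beta^2\rho^2))=\alpha\rho$, with $\sigma^2=\alpha^2+1/\rho^2$ as claimed. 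The requirement $\theta>0$ forces $s^2>0$, i.e.\ $\sigma^2>|\mu|^2$, which is consistent with being at the top of the spectrum of $T^*T$.

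The final step is to establish uniqueness of the positive $\rho$ making $\theta\in(0,\pi)$ and to identify this choice with the \emph{largest} singular value, not just some singular value. Heuristically, each interval $(k\pi,(k+1)\pi)$ should contain one solution, corresponding to a singular value of $T$; the branch $(0,\pi)$ gives the smallest $\rho$ and hence the largest $\sigma$. I expect this maximality argument to be the main obstacle: one must analyse monotonicity of $\cot\theta - \alpha/(s^2\theta)$ as a function of $\rho$ on the interval $(0,1/|\beta|)$ (or $(0,\infty)$ when $\beta=0$) to get uniqueness within the first branch, and separately rule out the possibility that a larger $\sigma$ arises with $s^2\le 0$, which would turn the ODE hyperbolic and require a separate (but parallel) analysis showing no such eigenvalue exists. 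The algebraic simplification yielding the clean form $\cot\theta=\alpha/(s^2\theta)$ is routine but delicate because of the $1/\bar\mu$ term, and the degenerate cases $\mu=0$ (recovering Halmos's $\|V\|=2/\pi$ via $\rho=\pi/2$) and $\alpha=0$ should be checked explicitly as sanity tests.
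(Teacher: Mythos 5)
Your computational core is correct, and it actually does more than the paper, whose ``proof'' consists of citing \cite[Theorems~2.2 and 2.3]{LT10} for the singular values of $I+\nu V$ and then rescaling via $\|V+\mu I\|=|\mu|\,\|I+(1/\mu)V\|$; you are in effect re-deriving the LT10 computation directly for $V+\mu I$. I checked the key steps: the ODE $s^2f''+2i\beta f'+f=0$, the boundary conditions at $0$ and $1$, the reduction of the $2\times 2$ compatibility condition to $\cot\theta=\alpha/(s^2\theta)$ (the $1/\bar\mu$ term does collapse cleanly, since $c-i\beta/s^2=1/\bar\mu$ and $\theta^2-\beta^2/s^4=1/s^2$), and the substitution $\rho=1/\sqrt{\sigma^2-\alpha^2}$ giving $s^2\theta=1/\rho$ and $\theta=\rho/(1-\beta^2\rho^2)$. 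All of that is right.

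The gap is the part you yourself defer: the spectral bookkeeping that turns ``$\delta$ solves a transcendental equation'' into ``$\sqrt{\alpha^2+1/\rho^2}$ is the norm.'' Several of the items you flag as obstacles are in fact easy and you should just say so: any root of the equation has $\beta^2\rho^2<1$, hence $\sigma^2=\alpha^2+1/\rho^2>\alpha^2+\beta^2=|\mu|^2$, which automatically dominates the essential spectrum and any putative eigenvalue with $s^2\le 0$, so no separate hyperbolic analysis is needed; and since $\theta(\rho)$ is strictly increasing on $(0,1/|\beta|)$ while $\sigma^2=\alpha^2+1/\rho^2$ is strictly decreasing, roots in higher branches $\theta\in(k\pi,(k+1)\pi)$ give strictly smaller singular values, so the first branch is automatically the maximal one. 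What genuinely remains unproved in your write-up is: (i) \emph{existence} of a root in the first branch (intermediate value theorem on $F(\rho)=\cot\theta(\rho)-\alpha\rho$, which runs from $+\infty$ to $-\infty$) together with the converse direction of the Halmos method, namely that a root of the compatibility condition really does produce a nonzero eigenfunction of $(V+\mu I)^*(V+\mu I)$; and (ii) \emph{uniqueness} within the first branch, which is the assertion built into the theorem statement. For $\alpha>-1$ one has $F'\le -1-\alpha<0$ and uniqueness is immediate, but for $\alpha\le -1$ the function $F$ need not be monotone in $\rho$, and one must instead check that $F'<0$ at every zero of $F$ (using $\cot\theta=\alpha\rho$ and $\theta'\ge 1$ there), so that every crossing is a downcrossing. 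None of this is written out, and the degenerate case $\mu=0$ (where your derivation divides by $\bar\mu$) is only promised as a sanity check; as it stands the proposal is a correct plan with its last, load-bearing step missing.
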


\begin{proof}
As mentioned in the introduction, Lyubich and Tsedenbayar used the Halmos method
to determine the singular values of $I+\nu V$. The details are somewhat lengthy, and we do
not reproduce them here, referring the interested reader to \cite[Theorem~2.2]{LT10}.
In particular, the largest singular value gives the norm of $I+\nu V$, see \cite[Theorem~2.3]{LT10}.
Using the elementary relation $\|V+\mu I\|=|\mu|\|I+(1/\mu)V\|$,
we deduce the formula in the statement of the theorem.
\end{proof}


In the rest of the section, we explore some consequences of Theorem~\ref{T:linear}.
Figure~\ref{F:contour1} is a contour plot of the function $\mu\mapsto\|V+\mu I\|$, computed using the theorem.
As with all the plots in this article, the computations were performed using Haskell,
and the graphics produced using Gnuplot.

\begin{figure}[ht]
\begin{center}
\includegraphics[scale=0.18, trim= 0 100 0 50]{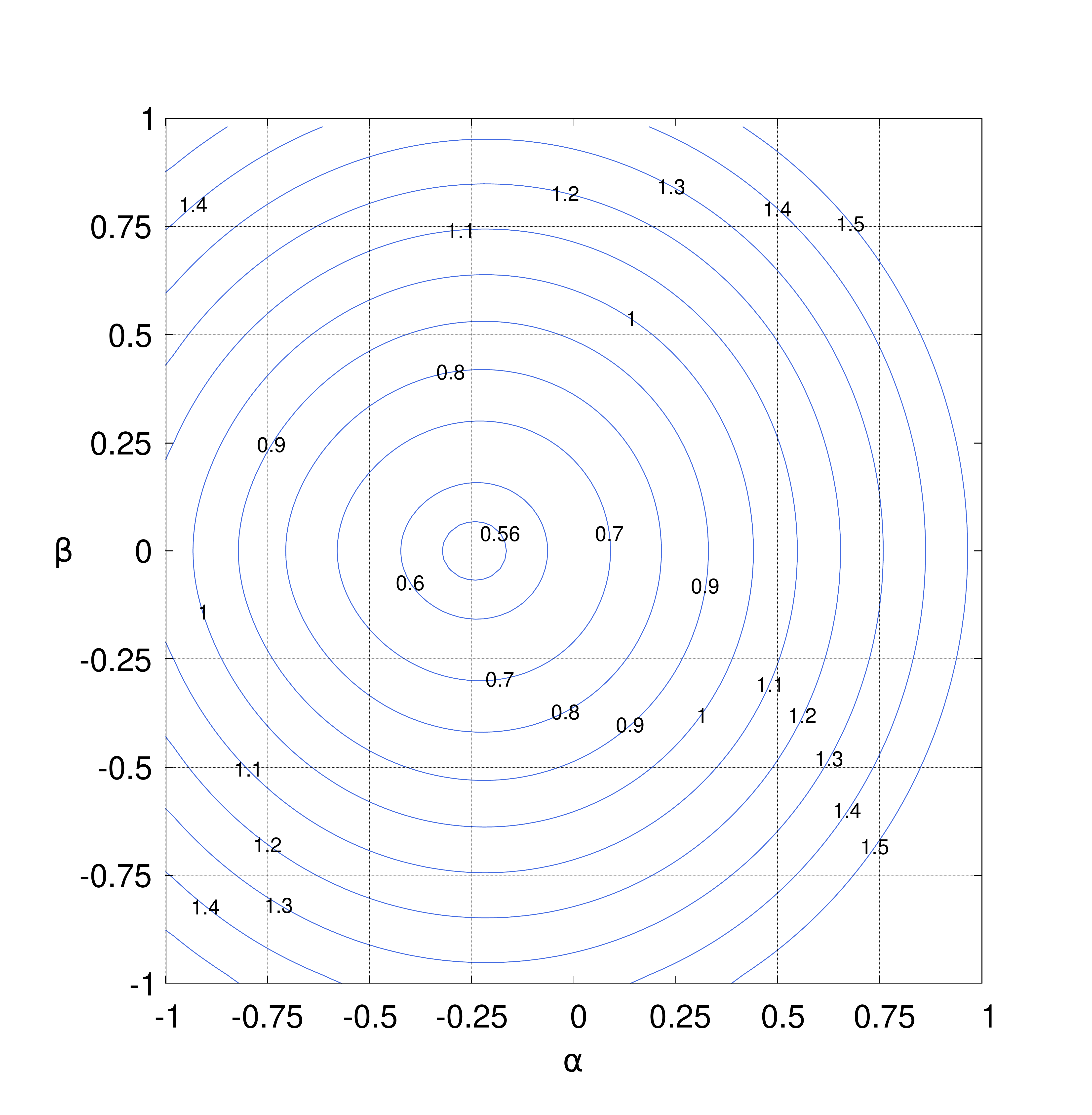}
\caption{Contour plot of the function $\mu\mapsto\|V+\mu I\|$}
\label{F:contour1}
\end{center}
\end{figure}

Several features are immediately apparent. 
First, the function appears to be symmetric about the horizontal axis.
This is obvious from Theorem~\ref{T:linear}, since $\|V+\mu I\|$ depends on $\beta$ only through $\beta^2$.
(It could also have been proved directly by exploiting the fact that $V$ is unitarily equivalent to $V^*$
via $V^*=U^*VU$, where $Uf(x):=f(1-x)$.)

Another obvious feature in Figure~\ref{F:contour1} is the apparent existence of a minimum,
attained near the point $-0.25$, with value about $0.55$. The next theorem confirms this observation.

\begin{theorem}\label{T:minimum}
The minimum of $\|V+\mu I\|$ is attained at a unique point $\mu_0\approx-0.2429626850$
and has value $\sqrt{\mu_0^2-\mu_0}\approx0.5495393994$.
The exact value of $\mu_0$ is $-1/\rho_0^2$, 
where $\rho_0$ is the unique solution in $(\pi/2,\pi)$ of the equation $\rho_0+\tan\rho_0=0$.
\end{theorem}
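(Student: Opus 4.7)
The plan is to reparametrize the minimization in terms of the auxiliary quantity $s := \rho/(1-\beta^2\rho^2)$ already appearing in Theorem~\ref{T:linear}, which turns the two-variable problem into a one-variable one.

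For $\mu = \alpha + i\beta \in \CC$, let $\rho > 0$ be as in Theorem~\ref{T:linear} and $s \in (0,\pi)$ as above. The defining relation $\cot s = \alpha\rho$ gives $\alpha = \cot(s)/\rho$, so
\[
\|V+\mu I\|^2 = \alpha^2 + \frac{1}{\rho^2} = \frac{\cot^2 s + 1}{\rho^2} = \frac{1}{(\rho\sin s)^2}.
\]
Solving $1-\beta^2\rho^2 = \rho/s$ for $\beta^2$ yields $\beta^2 = (s-\rho)/(s\rho^2)$, so $\beta \in \RR$ is equivalent to $\rho \le s$, with equality iff $\beta = 0$. A direct check shows that every pair $(s,\rho)$ with $0 < \rho \le s < \pi$ arises from a unique $\mu$ (up to the sign of $\beta$), so minimizing $\|V+\mu I\|$ over $\mu \in \CC$ is equivalent to minimizing $1/(\rho\sin s)$ over this region.

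For fixed $s$, $1/(\rho\sin s)$ is strictly decreasing in $\rho$, so the minimum in $\rho$ is attained uniquely at $\rho = s$ (i.e.\ $\beta = 0$). The remaining task is to maximize $g(s) := s\sin s$ on $(0,\pi)$. Since $g>0$ on $(0,\pi)$ with $g(0)=g(\pi)=0$, and $g'(s) = \sin s + s\cos s > 0$ on $(0,\pi/2]$, the maximum lies in $(\pi/2,\pi)$; on this subinterval $g''(s) = 2\cos s - s\sin s < 0$, so $g'$ is strictly decreasing and has a unique zero $\rho_0 \in (\pi/2,\pi)$. Dividing $g'(\rho_0)=0$ by the nonzero $\cos\rho_0$ converts it into $\rho_0 + \tan\rho_0 = 0$.

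The unique minimizer therefore corresponds to $s=\rho=\rho_0$, whence $\mu_0 = \cot(\rho_0)/\rho_0 = -1/\rho_0^2$, using $\cot\rho_0 = -1/\rho_0$. Squaring $\tan\rho_0 = -\rho_0$ yields $\sin^2\rho_0 = \rho_0^2/(1+\rho_0^2)$, hence
\[
\|V+\mu_0 I\|^2 = \frac{1}{\rho_0^2\sin^2\rho_0} = \frac{1+\rho_0^2}{\rho_0^4} = \mu_0^2 - \mu_0,
\]
matching the claimed value. I expect the least obvious step to be spotting the collapse $\|V+\mu I\|^2 = 1/(\rho\sin s)^2$ and recognizing that the constraint $\beta^2 \ge 0$ is exactly $\rho \le s$; once those are in place, the reduction to maximizing $s\sin s$ on $(0,\pi)$ is essentially mechanical, and uniqueness is automatic because the minimum in $\rho$ and the maximum in $s$ are each attained at a single point.
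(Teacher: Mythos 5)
Your proof is correct, and it takes a genuinely different route from the paper's. The paper first establishes existence of a minimizer by coercivity, then reduces to the real axis by combining the symmetry $\beta\mapsto-\beta$ with convexity of $\beta\mapsto\|V+(\alpha+i\beta)I\|$, and finally runs a Lagrange-multiplier computation on the constrained problem $\min\{\alpha^2+1/\rho^2 : \cot\rho=\alpha\rho,\ \rho\in(0,\pi)\}$ to arrive at $\rho+\tan\rho=0$. You instead change variables to $(s,\rho)$ with $s=\rho/(1-\beta^2\rho^2)$, notice the collapse $\|V+\mu I\|^2=1/(\rho\sin s)^2$ together with the observation that $\beta\in\RR$ is exactly the constraint $\rho\le s$, and thereby reduce everything to maximizing $s\sin s$ on $(0,\pi)$. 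This buys a fully elementary, self-contained argument: existence, uniqueness, and the second-order analysis all come for free from the monotonicity of $g'=\sin s+s\cos s$, whereas the paper's Lagrange computation only locates the critical point and relies on the separately established existence and the reduction to the real axis to conclude it is the global minimum. The one step you should spell out slightly more is the ``direct check'' that every pair $(s,\rho)$ with $0<\rho\le s<\pi$ actually arises from some $\mu$: given such a pair, set $\alpha=\cot(s)/\rho$ and $\beta^2=(s-\rho)/(s\rho^2)$; then this $\rho$ satisfies the two conditions of Theorem~\ref{T:linear} for that $\mu$, and the uniqueness clause of that theorem guarantees it is \emph{the} $\rho$ attached to $\mu$, so the correspondence is a genuine bijection (up to the sign of $\beta$) and the two optimization problems coincide. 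With that remark added, the argument is complete.
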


\begin{proof}
The function $\mu\mapsto\|V+\mu I\|$ is continuous, and tends to infinity as $|\mu|\to\infty$, so it attains a minimum
somewhere in the complex plane.

It is a simple consequence of Theorem~\ref{T:linear} that, given $\alpha\in\RR$, 
we have  $\|V+(\alpha+i\beta_1)I\|=\|V+(\alpha+i\beta_2)I\|$ if and only if $\beta_1=\pm\beta_2$. 
It follows that the map $\beta\mapsto\|V+(\alpha+i\beta)I\|$ is injective on each of the intervals
$[0,\infty)$ and $(-\infty,0]$. Since it is also convex, it must be strictly increasing on $[0,\infty)$ and
strictly decreasing on $(-\infty,0]$. This shows that the map $\mu\mapsto\|V+\mu I\|$ 
can only attain its minimum on the real axis.

So we seek to minimize $\|V+\alpha I\|$ for $\alpha\in\RR$.
By Theorem~\ref{T:linear}, applied with $\beta=0$, this amounts to minimizing
$\alpha^2+1/\rho^2$ subject to the constraints $\rho\in(0,\pi)$ and $\cot\rho=\alpha\rho$. 
We solve this problem using Lagrange multipliers. Consider the Lagrangian 
\[
L(\alpha,\rho,\lambda):=\alpha^2+1/\rho^2-\lambda(\cot\rho-\alpha\rho),
\]
where $\alpha,\lambda\in\RR$ and $\rho\in(0,\pi)$.
Equating the partial derivatives $\partial L/\partial \alpha,\partial L/\partial \rho,\partial L/\partial \lambda$ 
to zero yields the system of equations
\[
\left\{
\begin{aligned}
2\alpha+\lambda\rho&=0,\\
-2/\rho^3+\lambda\csc^2\rho+\lambda\alpha &=0,\\
\cot\rho-\alpha\rho&=0.
\end{aligned}
\right.
\]
Eliminating $\lambda$ and $\alpha$ leads to the equation $\rho+\tan\rho=0$,
which has a unique solution $\rho_0$ in $(0,\pi)$. Since $\tan\rho_0=-\rho_0<0$,
we see that in fact $\rho_0\in(\pi/2,\pi)$. Substituting back into the equation $\cot\rho-\alpha\rho=0$,
we get $\alpha=-1/\rho_0^2$, and hence $\|V+\alpha I\|=\sqrt{1/\rho_0^4+1/\rho_0^2}$.
\end{proof}

It is perhaps tempting to believe that the function $\alpha\mapsto\|V+\alpha I\|$ is symmetric about $\alpha=\mu_0$,
but in fact this is not true. For example, a calculation using Theorem~\ref{T:linear} gives
\[
\|V+(\mu_0+5)I\|=5.265379338\dots 
\quad\text{and}\quad
\|V+(\mu_0-5)I\|=5.253007667\dots .
\]

The behaviour of $\|V+\mu I\|$ on the imaginary axis is more straightforward.
There is even an explicit expression for $\|V+i\beta I\|$, generalizing Halmos's  result \eqref{E:Halmos}.
The following result is essentially \cite[Corollary~2.4]{LT10}.

\begin{theorem}\label{T:imagnorm}
For all $\beta\in\RR$,
\[
\|V+i\beta I\|=\frac{1}{\pi}+\sqrt{\beta^2+\frac{1}{\pi^2}}.
\]
\end{theorem}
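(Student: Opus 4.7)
The plan is to specialize Theorem~\ref{T:linear} to $\alpha=0$ and solve the resulting equations explicitly, which is feasible here precisely because the constraint reduces to a quadratic in $\rho$.

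First I would set $\alpha=0$ in Theorem~\ref{T:linear}. The transcendental equation $\cot(\rho/(1-\beta^2\rho^2))=\alpha\rho$ collapses to $\cot(\rho/(1-\beta^2\rho^2))=0$, and since the argument is required to lie in $(0,\pi)$, this forces
\[
\frac{\rho}{1-\beta^2\rho^2}=\frac{\pi}{2}.
\]
Rearranging yields the quadratic $\beta^2\rho^2+(2/\pi)\rho-1=0$ in the unknown $\rho$ (and, when $\beta=0$, the linear equation $\rho=\pi/2$, recovering Halmos's formula \eqref{E:Halmos}).

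Next I would solve this quadratic. Since we need $\rho>0$, the appropriate root is
\[
\rho=\frac{-1/\pi+\sqrt{1/\pi^2+\beta^2}}{\beta^2}.
\]
By Theorem~\ref{T:linear} applied with $\alpha=0$ we have $\|V+i\beta I\|=1/\rho$, so after rationalizing the denominator by multiplying top and bottom by $1/\pi+\sqrt{1/\pi^2+\beta^2}$, the $\beta^2$ cancels and one obtains
\[
\|V+i\beta I\|=\frac{1}{\pi}+\sqrt{\beta^2+\frac{1}{\pi^2}},
\]
as claimed. The identity is continuous in $\beta$, so the $\beta=0$ case is consistent with the limit.

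There is no real obstacle here: once the $\alpha=0$ specialization is made, the only non-trivial step is recognizing that the transcendental constraint degenerates to a pure quadratic, after which the computation is just rationalization. The slightly delicate point is justifying the choice of root (the positive one), but that is immediate from the requirement $\rho>0$ built into Theorem~\ref{T:linear}.
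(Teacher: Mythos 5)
Your proposal is correct and follows essentially the same route as the paper: specialize Theorem~\ref{T:linear} to $\alpha=0$, observe that the constraint forces $\rho/(1-\beta^2\rho^2)=\pi/2$, and solve the resulting quadratic (the paper writes it directly as a quadratic in $1/\rho$, which avoids your rationalization step and the separate treatment of $\beta=0$, but this is only a cosmetic difference).
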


\begin{proof}
Applying Theorem~\ref{T:linear} with $\alpha=0$, we have $\|V+i\beta I\|=1/\rho$,
where $\rho$ is the unique positive solution of
\[
\frac{\rho}{1-\beta^2\rho^2}\in(0,\pi)
\quad\text{and}\quad
\cot\Bigl(\frac{\rho}{1-\beta^2\rho^2}\Bigr)=0.
\]
These conditions imply that
$\rho/(1-\beta^2\rho^2)=\pi/2$, or, equivalently, $(1/\rho)^2-(2/\pi)(1/\rho)-\beta^2=0$.
Solving this equation for $1/\rho$ and substituting into the formula for $\|V+i\beta I\|$ gives the result.
\end{proof}

It is also instructive to consider the function $\| I+\nu V\|$ for $\nu\in\CC$,
which can easily be calculated using Theorem~\ref{T:linear}.
Figure~\ref{F:contour2} is  a contour plot of the function $\nu\mapsto\|I+\nu V\|$.

\begin{figure}[ht]
\begin{center}
\includegraphics[scale=0.18, trim= 0 100 0 50]{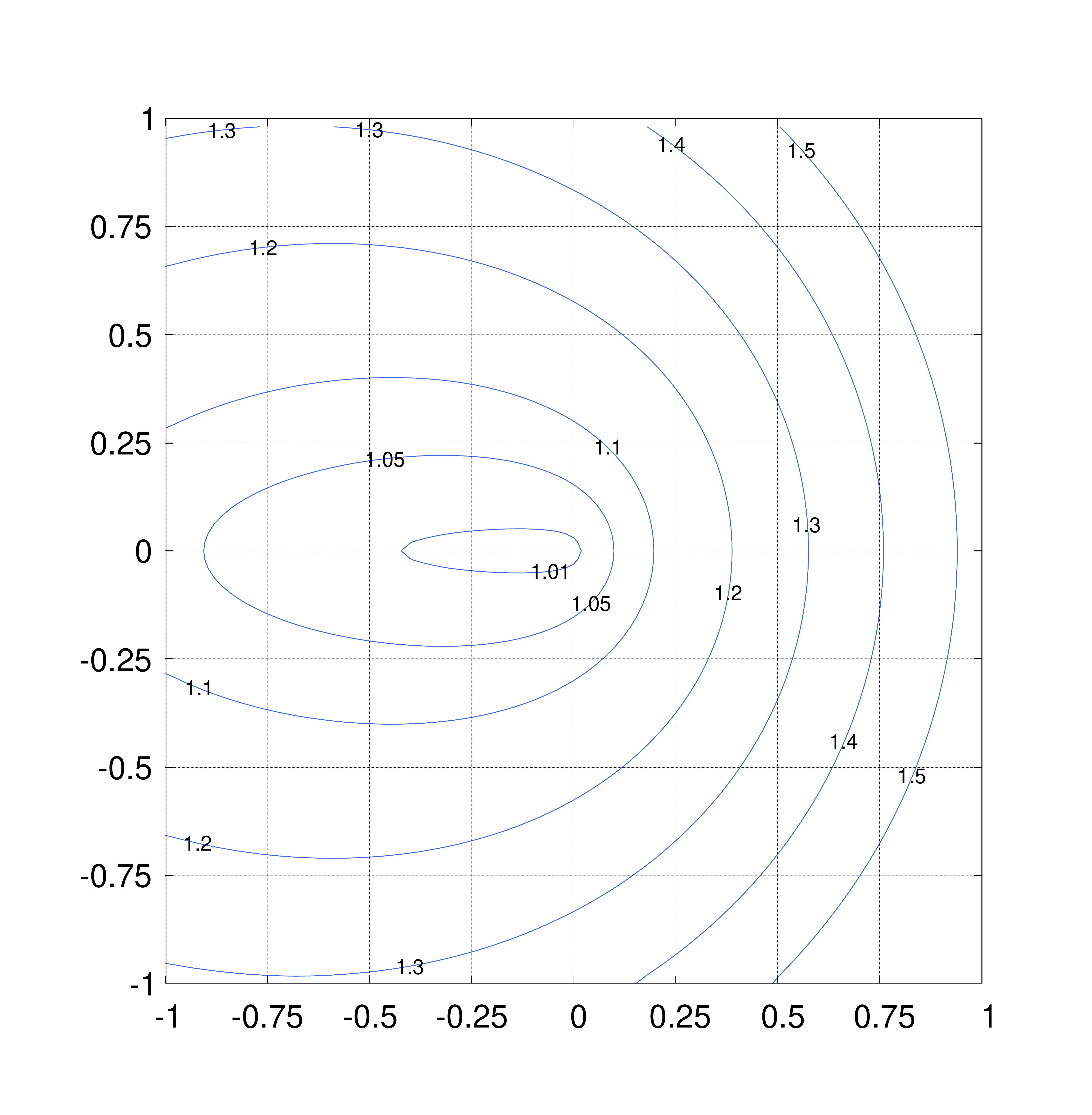}
\caption{Contour plot of the function $\nu\mapsto\|I+\nu V\|$}
\label{F:contour2}
\end{center}
\end{figure}

In this case, the minimum is attained at $\nu=0$. 
Indeed, since the operator norm is always at least as large as the spectral radius,
we clearly have $\|I+\nu V\|\ge 1$ for all $\nu\in\CC$. 
The following result,
due to Lyubich and Tsedenbayar \cite[Corollary~2.5]{LT10},
 shows that, in fact, the minimum is attained only at $0$.

\begin{theorem}\label{T:strict}
$\|I+\nu V\|>1$ for all $\nu\in\CC\setminus\{0\}$.
\end{theorem}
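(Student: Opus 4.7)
The plan is to reduce everything to Theorem~\ref{T:linear} via the elementary identity $\|I+\nu V\|=|\nu|\,\|V+(1/\nu)I\|$, valid whenever $\nu\ne0$. Setting $\mu:=1/\nu$ and writing $\mu=\alpha+i\beta$, the inequality $\|I+\nu V\|>1$ is equivalent to $\|V+\mu I\|>|\mu|$, i.e.\ to
\[
\alpha^2+\frac{1}{\rho^2}>\alpha^2+\beta^2,
\]
where $\rho$ is the number furnished by Theorem~\ref{T:linear}. So the whole statement collapses to proving the strict inequality $1/\rho^2>\beta^2$.

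The key observation is that this strict inequality is already encoded in the constraints defining $\rho$. Indeed, Theorem~\ref{T:linear} requires $\rho>0$ and $\rho/(1-\beta^2\rho^2)\in(0,\pi)$. Since the numerator $\rho$ is positive and the quotient is required to be positive (not just finite and nonzero), the denominator must satisfy $1-\beta^2\rho^2>0$; in particular division by zero, corresponding to $\beta^2\rho^2=1$, is ruled out. Thus $\beta^2\rho^2<1$, i.e.\ $1/\rho^2>\beta^2$, which is exactly what we need. This even handles the degenerate case $\beta=0$ (where the inequality $1/\rho^2>0$ is trivial), so no separate treatment of real versus non‑real $\nu$ is necessary.

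Combining these two steps gives $\|V+\mu I\|^2=\alpha^2+1/\rho^2>\alpha^2+\beta^2=|\mu|^2$, whence $\|I+\nu V\|=|\nu|\,\|V+\mu I\|>|\nu|\cdot|\mu|=1$. There is no real obstacle here; the only subtlety is noticing that Theorem~\ref{T:linear}'s interval condition $\rho/(1-\beta^2\rho^2)\in(0,\pi)$ is what forces the strict inequality, so that the spectral‑radius lower bound $\|I+\nu V\|\ge1$ can never be attained for $\nu\ne0$.
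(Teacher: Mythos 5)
Your proposal is correct and follows essentially the same route as the paper: reduce to $\|V+\mu I\|>|\mu|$ via $\mu=1/\nu$, then note that the constraint $\rho/(1-\beta^2\rho^2)\in(0,\pi)$ in Theorem~\ref{T:linear} forces $\beta^2\rho^2<1$, hence $1/\rho^2>\beta^2$. Your write-up just makes explicit the reduction step and the reason the denominator must be positive, which the paper leaves implicit.
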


\begin{proof}
Let $\mu=\alpha+i\beta$.
By Theorem~\ref{T:linear}, $\|V+\mu I\|=\sqrt{\alpha^2+1/\rho^2}$,
for some  positive $\rho$ with $\rho/(1-\beta^2\rho^2)\in(0,\pi)$.
Then $\beta^2\rho^2<1$, so $\|V+\mu I\|>\sqrt{\alpha^2+\beta^2}=|\mu|$.
The result follows.
\end{proof}

Lyubich and Tsedenbayar 
conjectured that, if $p(z)$ is any non-constant polynomial with $p(0)=1$,
then $\|p(V)\|>1$. Their conjecture was later refuted by ter Elst and Zem\'anek \cite{tZ18}.
We shall return to this topic in the next section.

The last theorem in the present section describes the asymptotic behaviour of $\|I+\nu V\|$ as $\nu\to0$.
It will play a role when we come to treat the numerical range of $V$ in \S\ref{S:nr}.

\begin{theorem}\label{T:nearI}
For  $\theta\in[-\pi,\pi]$, 
\begin{equation}\label{E:nearI}
\|I+r e^{i\theta} V\|=1+r\frac{\sin\theta}{2\theta}+O(r^2)
\quad(r\to0^+).
\end{equation}
\end{theorem}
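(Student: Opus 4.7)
My plan is to reduce this to the asymptotics of $\|V+\mu I\|$ at large $|\mu|$, which is governed by Theorem~\ref{T:linear}. Using $\|I+re^{i\theta}V\|=r\,\|V+(1/r)e^{-i\theta}I\|$ and applying Theorem~\ref{T:linear} with $\alpha=(\cos\theta)/r$ and $\beta=-(\sin\theta)/r$, I introduce $u:=r/\rho$ and $\phi:=\rho/(1-\beta^2\rho^2)$. An algebraic rearrangement gives
\[
\|I+re^{i\theta}V\|^2=\cos^2\theta+u^2,
\]
where $u$ is the unique positive quantity satisfying
\[
\phi=\frac{ru}{u^2-\sin^2\theta}\in(0,\pi)
\qquad\text{and}\qquad
\cot\phi=\frac{\cos\theta}{u}.
\]

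The main case is $\theta\in(-\pi,\pi)\setminus\{0\}$. Since $\phi>0$, we must have $u>|\sin\theta|$, so I set $\delta:=u-|\sin\theta|>0$, which factorises the expression for $\phi$ as
\[
\phi=\frac{r(|\sin\theta|+\delta)}{\delta(2|\sin\theta|+\delta)}.
\]
A compactness argument first shows $\delta\to 0$ and $\phi\to|\theta|$ as $r\to 0^+$: if $\delta$ stayed bounded away from $0$ on a subsequence, then $\phi\to 0$ and so $\cot\phi\to+\infty$, contradicting the boundedness of $\cos\theta/u$; a symmetric argument rules out $u\to\infty$. With the leading behaviour fixed, I invert the above relation. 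Expanding $\phi=|\theta|+O(\delta)$ (from the smoothness of $\cot^{-1}$ at the non-singular point $\cos\theta/|\sin\theta|$), and $(2|\sin\theta|+\delta)/(|\sin\theta|+\delta)=2+O(\delta)$, I obtain $r=2|\theta|\,\delta+O(\delta^2)$, and hence $\delta=r/(2|\theta|)+O(r^2)$. Substituting back and using $|\sin\theta|/|\theta|=\sin\theta/\theta$,
\[
\|I+re^{i\theta}V\|^2=1+2|\sin\theta|\,\delta+\delta^2=1+\frac{r\sin\theta}{\theta}+O(r^2),
\]
and taking the square root yields the claimed expansion.

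The endpoint cases $\theta=0$ and $\theta=\pm\pi$ (where $\sin^2\theta=0$) correspond to a degenerate regime with $u\to 0$ and must be treated separately. For $\theta=0$, $\rho\in(0,\pi/2)$ satisfies $\cot\rho=\rho/r$; using $\rho\cot\rho=1-\rho^2/3+O(\rho^4)$ this gives $\rho^2=r+O(r^2)$, whence $\|I+rV\|^2=1+r+O(r^2)$. For $\theta=\pm\pi$, setting $\epsilon:=\pi-\rho$, the defining equation reduces to $\pi\epsilon=r+O(\epsilon^2)$, so $u^2=O(r^2)$ and $\|I\mp rV\|^2=1+O(r^2)$. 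Both cases match the formula at the limiting values $\sin\theta/(2\theta)=1/2$ and $0$. The one subtlety I expect to be the main technical point is securing the sharp $O(r^2)$ (rather than merely $o(r)$) remainder in the main case: this reduces to carrying the Taylor expansion of the implicit relation between $r$ and $\delta$ one term beyond the naive leading-order estimate, which is permissible because the function $\delta\mapsto\cot^{-1}(\cos\theta/(|\sin\theta|+\delta))$ is smooth at $\delta=0$ for every $\theta\in(-\pi,\pi)\setminus\{0\}$.
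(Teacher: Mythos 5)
Your proof is correct and follows essentially the same route as the paper: both reduce to the implicit characterization in Theorem~\ref{T:linear}, treat $\theta=0$ and $\theta=\pm\pi$ separately, and in the generic case show $\phi\to|\theta|$ and then extract the first-order term from the implicit relation (your quantity $2|\sin\theta|\,\delta+\delta^2$ is exactly the paper's $r^2/\rho^2-\sin^2\theta$). As a minor bonus, in the case $\theta=\pm\pi$ you correctly locate $\rho$ near $\pi$, whereas the paper asserts $\rho=\pi/2+O(r)$ --- harmless either way, since only a lower bound on $\rho$ is needed there.
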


\begin{proof}
By Theorem~\ref{T:linear},
\begin{equation}\label{E:normformula}
\|I+re^{i\theta}V\|=\sqrt{\cos^2\theta+r^2/\rho^2},
\end{equation}
where
\begin{equation}\label{E:conds}
\phi:=\frac{\rho}{1-(\rho^2/r^2)\sin^2\theta}\in(0,\pi)
\quad\text{and}\quad
\cot\phi=(\rho/r)\cos\theta.
\end{equation}

If $\theta=0$, then the conditions in \eqref{E:conds} simplify to $\phi=\rho$ and  $\rho\tan\rho=r$. 
Then, as  $r\to0^+$, we have $\rho=\sqrt{r}(1+O(r))$. Feeding this information back into \eqref{E:normformula},
we obtain 
\[
\|I+re^{i\theta}V\|=\sqrt{1+r+O(r^2)}=1+\frac{r}{2}+O(r^2) \quad(r\to0^+).
\]

If $\theta=\pm\pi$, then then the conditions in \eqref{E:conds} simplify to $\phi=\rho$ and  $\rho\tan\rho=-r$. 
Then, as  $r\to0^+$, we have $\rho=\pi/2+O(r)$. Feeding this information back into \eqref{E:normformula},
we obtain 
\[
\|I+re^{i\theta}V\|=\sqrt{1+O(r^2)}=1+O(r^2) \quad(r\to0^+).
\]

In the rest of the argument, we assume that $\sin\theta\ne0$.
Since $\|I+re^{i\theta}V\|=1+O(r)$ as $r\to0^+$, the formula \eqref{E:normformula}
implies that $r^2/\rho^2=\sin^2\theta+O(r)$, and therefore, from \eqref{E:conds},
\[
\frac{\cot^2\theta}{\cot^2\phi}=\frac{r^2}{\rho^2}\frac{1}{\sin^2\theta}=1+O(r).
\]
This implies that $\phi=|\theta|+O(r)$.
Hence, as $r\to0^+$,
\begin{align*}
\frac{r^2}{\rho^2}-\sin^2\theta&=\frac{r^2}{\rho^2}\Bigl(1-(\rho^2/r^2)\sin^2\theta\Bigr)
=\frac{r^2}{\rho^2}\frac{\rho}{\phi}=\frac{r}{\phi}\frac{r}{\rho}\\
&=\frac{r}{|\theta|+O(r)}(|\sin\theta| +O(r))
=r\frac{\sin\theta}{\theta}+O(r^2).
\end{align*}
Feeding this into \eqref{E:normformula}, we obtain
\[
\|I+re^{i\theta}V\|=\sqrt{\cos^2\theta+\sin^2\theta+r\frac{\sin\theta}{\theta}+O(r^2)}
=1+r\frac{\sin\theta}{2\theta}+O(r^2).\qedhere
\]
\end{proof}


\section{Quadratic polynomials}\label{S:quadratic}

Our aim in this section is to compute the operator norm of $p(V)=V^2+\sigma V+\tau I$, 
where $\sigma,\tau\in\RR$.
We restrict ourselves to real coefficients, 
because the case of general complex coefficients rapidly becomes
prohibitively complicated.

The general strategy is the one outlined in \cite{LT10}, once again following 
the Halmos method.
Since $p(V)^*p(V)$ is both positive and of the form ($\tau^2 I+$ compact), 
its spectrum consists of $\tau^2$ together with a 
sequence of non-negative eigenvalues converging to $\tau^2$. 
We therefore try to identify these eigenvalues.
Since our interest is in the spectral radius, 
we can restrict ourselves to considering eigenvalues that are greater than $\tau^2$.
 
So let $\lambda$ be an eigenvalue of $p(V)^*p(V)$ such that $\lambda>\tau^2$, 
say $\lambda=\tau^2+\delta$, where $\delta>0$.
Then there exists $f\in L^2[0,1]$ with $f\not\equiv0$ such that 
\begin{equation}\label{E:evalueq}
(V^2+\sigma V+\tau I)^*(V^2+\sigma V+\tau I)f=(\tau^2+\delta) f.
\end{equation}
Since $V$ and $V^*$ both map $L^2[0,1]$ into $C[0,1]$ and $\delta>0$, 
we have $f\in C[0,1]$. By bootstrapping,
it follows that $f\in C^\infty[0,1]$.
Let $D$ denote the differentiation operator. By the fundamental theorem of calculus, $DV=I$ and $DV^*=-I$.
Applying $D^4$ to \eqref{E:evalueq} and simplifying, we obtain the differential equation
\begin{equation}\label{E:odeq}
\delta f^{(4)}(x)+(\sigma^2-2\tau)f^{(2)}(x)- f(x)=0.
\end{equation}
A short calculation shows that $V^2f(x)=\int_0^x(x-t)f(t)\,dt$.
Since  $VF(0)=0$ and $(V^*F)(1)=0$ for all $F\in L^2[0,1]$, 
we obtain the following boundary conditions:
\begin{equation}\label{E:bcq}
\left\{
\begin{aligned}
\sigma\tau\int_0^1 f(t)\,dt+\tau\int_0^1(1-t)f(t)\,dt&=\delta f(1),\\
(\tau-\sigma^2)\int_0^1 f(t)\,dt-\sigma\int_0^1(1-t)f(t)\,dt &=\delta f'(1),\\
(2\tau-\sigma^2)f(0)&=\delta f''(0),\\
(2\tau-\sigma^2)f'(0)&=\delta f^{(3)}(0).
\end{aligned}
\right.
\end{equation}

We try a solution of the form $e^{\omega x}$. 
This will indeed be a solution of \eqref{E:odeq} provided that $\omega$ 
satisfies the quartic equation
\[
\delta\omega^4+(\sigma^2-2\tau)\omega^2-1=0,
\]
in other words, if $\omega$ is the square root of a solution $t$ of the quadratic equation
\[
\delta t^2+(\sigma^2-2\tau)t-1=0.
\]
Since the coefficient of $t^2$ is strictly positive and the constant coefficient is strictly negative, 
this quadratic equation has two real roots, one positive, the other negative. It follows that
the quartic equation has roots $\pm\omega_1$ and $\pm i\omega_2$, where $\omega_1,\omega_2>0$.
After rearranging slightly, we deduce that the general solution to \eqref{E:odeq} is
\begin{equation}\label{E:odesolnq}
f(x)=A_1\cosh\omega_1x+B_1\sinh\omega_1x+A_2\cos \omega_2x+B_2\sin\omega_2x,
\end{equation}
where $\omega_1,\omega_2$ are the unique positive solutions to
\begin{equation}\label{E:omegaq}
\left\{
\begin{aligned}
\delta\omega_1^4+(\sigma^2-2\tau)\omega_1^2-1&=0,\\
\delta\omega_2^4-(\sigma^2-2\tau)\omega_2^2-1&=0.
\end{aligned}
\right.
\end{equation}

Plugging the function $f(x)$ from \eqref{E:odesolnq} into the last two boundary conditions 
in \eqref{E:bcq}, 
we obtain 
\begin{align*}
(2\tau-\sigma^2)(A_1+A_2)&=\delta(\omega_1^2A_1-\omega_2^2A_2),\\
(2\tau-\sigma^2)(\omega_1B_1+\omega_2B_2)&=\delta(\omega_1^3B_1-\omega_2^3B_2),
\end{align*}
which, upon using the relations \eqref{E:omegaq} 
to eliminate $\delta$, simplify to
\begin{align*}
A_1/\omega_1^2&=A_2/\omega_2^2,\\
B_1/\omega_1&=B_2/\omega_2.
\end{align*}
If we call these two quantities $A$ and $B$ respectively, we deduce that $f(x)=Ag(x)+Bh(x)$, where
\begin{align*}
g(x)&:=\omega_1^2\cosh\omega_1x+\omega_2^2\cos \omega_2x,\\
h(x)&:=\omega_1\sinh\omega_1x+\omega_2\sin\omega_2x.
\end{align*}
Routine calculations yield
\begin{align*}
g(1)&=\Omega_2, &g'(1)&=\Omega_3, &\int_0^1 g&=\Omega_1, &\int_0^1 (1-x)g&=\Omega_0,\\
h(1)&=\Omega_1, &h'(1)&=\Omega_2, &\int_0^1 h&=\Omega_0, &\int_0^1(1-x)h&=(\sigma^2-2\tau)\Omega_1+\delta\Omega_3,
\end{align*}
where
\begin{equation}\label{E:Omega}
\left\{
\begin{aligned}
\Omega_0&:=\cosh\omega_1-\cos\omega_2,\\
\Omega_1&:=\omega_1\sinh\omega_1+\omega_2\sin\omega_2,\\
\Omega_2&:=\omega_1^2\cosh\omega_1+\omega_2^2\cos\omega_2,\\
\Omega_3&:=\omega_1^3\sinh\omega_1-\omega_2^3\sin\omega_2.\\
\end{aligned}
\right.
\end{equation}
(In the case of $\int_0^1(1-x)h$, we use
\eqref{E:omegaq} to express $\omega_1^{-1}\sinh\omega_1-\omega_2^{-1}\sin\omega_2$ 
 in terms of $\Omega_1$ and $\Omega_3$.)
Thus, plugging $f=Ag+Bh$ into the first two boundary conditions \eqref{E:bcq}, we obtain that
\begin{align*}
\sigma\tau(A\Omega_1+B\Omega_0)+\tau(A\Omega_0+B((\sigma^2-2\tau)\Omega_1+\delta\Omega_3))-\delta(A\Omega_2+B\Omega_1)&=0,\\
(\tau-\sigma^2)(A\Omega_1+B\Omega_0)-\sigma(A\Omega_0+B((\sigma^2-2\tau)\Omega_1+\delta\Omega_3))-\delta(A\Omega_3+B\Omega_2)&=0.
\end{align*}
After rearrangement, these become the linear $2\times 2$ system
\[
\begin{pmatrix}
\tau\Omega_0+\sigma\tau\Omega_1-\delta\Omega_2
&\sigma\tau\Omega_0+(\sigma^2\tau-2\tau^2-\delta)\Omega_1+\tau\delta\Omega_3\\
-\sigma\Omega_0+(\tau-\sigma^2)\Omega_1-\delta\Omega_3
&(\tau-\sigma^2)\Omega_0-\sigma(\sigma^2-2\tau)\Omega_1-\delta\Omega_2-\sigma\delta\Omega_3
\end{pmatrix}
\begin{pmatrix}
A\\B
\end{pmatrix}
=
\begin{pmatrix}
0\\0
\end{pmatrix}.
\]
Since $f\not\equiv0$, the constants $A$ and $B$ cannot both be zero, so the $2\times2$ matrix must be singular,
in other words,
\begin{equation}\label{E:bigeqn}
\begin{aligned}
&\Bigl(\tau\Omega_0+\sigma\tau\Omega_1-\delta\Omega_2\Bigr)
\Bigl((\tau-\sigma^2)\Omega_0-\sigma(\sigma^2-2\tau)\Omega_1-\delta\Omega_2-\sigma\delta\Omega_3\Bigr)\\
&-\Bigl(\sigma\tau\Omega_0+(\sigma^2\tau-2\tau^2-\delta)\Omega_1+\tau\delta\Omega_3\Bigr)\Bigl(-\sigma\Omega_0+(\tau-\sigma^2)\Omega_1-\delta\Omega_3\Bigr)=0.
\end{aligned}
\end{equation}
Since the $\Omega_j$ are functions of $\omega_1$ and $\omega_2$, which in turn are functions of $\delta$,
this last relation gives an equation that must be satisfied by $\delta$. Conversely, if $\delta$ is a positive solution
to this equation, then we can work backwards, and deduce that $\lambda=\tau^2+\delta$ is an eigenvalue of $p(V)^*p(V)$.

We record our findings in a theorem.

\begin{theorem}\label{T:quadratic}
Let $p(V):=V^2+\sigma V+\tau I$, where $\sigma,\tau\in\RR$.
For $\delta>0$, define $\omega_j$ via \eqref{E:omegaq} and $\Omega_j$ via \eqref{E:Omega}.
Then $\|p(V)\|=\sqrt{\tau^2+\delta}$, where $\delta$ is the largest positive solution to 
\eqref{E:bigeqn}. If no positive solution exists, then $\|p(V)\|=|\tau|$.
\end{theorem}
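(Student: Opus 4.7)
The plan is to convert the derivation sketched in the paragraphs preceding the theorem into a clean equivalence between eigenvalues of $p(V)^*p(V)$ strictly exceeding $\tau^2$ and positive solutions $\delta$ of \eqref{E:bigeqn}, and then to read off $\|p(V)\|$ from the largest such $\delta$.

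First I would justify the reduction to an eigenvalue search. Since $p(V)=V^2+\sigma V+\tau I$ is the sum of $\tau I$ and a compact operator, $p(V)^*p(V)$ has the form $\tau^2 I+K$ with $K$ self-adjoint and compact. Its spectrum is therefore $\{\tau^2\}$ together with a (possibly empty) sequence of eigenvalues of finite multiplicity, accumulating only at $\tau^2$. Because $\|p(V)\|^2$ equals the spectral radius of this self-adjoint operator, $\|p(V)\|$ is either $|\tau|$, if every eigenvalue lies in $[0,\tau^2]$, or $\sqrt{\tau^2+\delta_{\max}}$, where $\delta_{\max}$ is the largest $\delta>0$ with $\tau^2+\delta$ an eigenvalue.

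The core step is then to prove the bijection between eigenvalues $\tau^2+\delta>\tau^2$ and positive roots of \eqref{E:bigeqn}. The forward direction is exactly the derivation in the text: a bootstrap argument puts any eigenfunction into $C^\infty[0,1]$; applying $D^4$ to the eigenvalue equation yields \eqref{E:odeq} together with the four conditions \eqref{E:bcq}; the characteristic roots are $\pm\omega_1,\pm i\omega_2$ given by \eqref{E:omegaq}; the last two conditions in \eqref{E:bcq} force $f$ to have the specific form $Ag+Bh$; and the first two conditions become the $2\times 2$ linear system whose singularity is precisely \eqref{E:bigeqn}. For the converse, given a positive $\delta$ solving \eqref{E:bigeqn}, I would define $\omega_1,\omega_2$ by \eqref{E:omegaq}, pick a nonzero $(A,B)$ in the kernel of the (now singular) $2\times 2$ matrix, set $f:=Ag+Bh$, and verify directly that $f$ satisfies the ODE, the four boundary conditions, and therefore the operator identity \eqref{E:evalueq}; here one integrates the fourth-order equation back up using $DV=I$ and $DV^*=-I$.

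The main obstacle is making the converse rigorous. The forward chain of implications loses information at several points (applying $D^4$, and using \eqref{E:omegaq} to eliminate $\delta$ from the boundary relations), so one must confirm that each such step is actually an equivalence for the functions $Ag+Bh$ under consideration. With that bookkeeping settled, the conclusion is immediate: if \eqref{E:bigeqn} has positive solutions, they form a bounded set whose only possible accumulation point is $0$ (because the corresponding eigenvalues of $p(V)^*p(V)$ accumulate only at $\tau^2$), so a largest solution $\delta_{\max}$ exists and $\|p(V)\|=\sqrt{\tau^2+\delta_{\max}}$; otherwise $\sigma(p(V)^*p(V))\subset[0,\tau^2]$, and $\|p(V)\|=|\tau|$.
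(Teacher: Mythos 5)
Your proposal is correct and takes essentially the same route as the paper: the theorem there is simply a record of the preceding Halmos-method derivation (reduction to the fourth-order ODE \eqref{E:odeq} with boundary conditions \eqref{E:bcq}, leading to the singularity condition \eqref{E:bigeqn}), and the converse is handled by the same ``work backwards'' verification you outline. The only difference is one of emphasis---you flag the reverse implication as the step needing careful bookkeeping, which the paper dispatches in a single sentence.
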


There is one case that is simple enough to solve analytically,
namely  $\sigma=\tau=0$, i.e., $p(V)=V^2$.

\begin{corollary}
$\|V^2\|=\gamma_0^{-2}$, where $\gamma_0$ is the smallest positive solution to 
$(\cosh\gamma_0)(\cos\gamma_0)=-1$. Numerically, $\|V^2\|\approx0.2844\dots$
\end{corollary}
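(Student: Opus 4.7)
The plan is to specialise Theorem~\ref{T:quadratic} to the case $\sigma=\tau=0$, where all but a few terms in the set-up collapse, and then to reduce the key equation \eqref{E:bigeqn} to the much simpler transcendental equation $\cosh\omega\cos\omega=-1$.

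First I would note that with $\sigma=\tau=0$, both equations in \eqref{E:omegaq} reduce to $\delta\omega^{4}=1$, so $\omega_{1}=\omega_{2}=:\omega=\delta^{-1/4}$. With this equality, the quantities in \eqref{E:Omega} simplify to
\[
\Omega_{0}=\cosh\omega-\cos\omega,\quad
\Omega_{1}=\omega(\sinh\omega+\sin\omega),\quad
\Omega_{2}=\omega^{2}(\cosh\omega+\cos\omega),\quad
\Omega_{3}=\omega^{3}(\sinh\omega-\sin\omega).
\]
Next I would substitute $\sigma=\tau=0$ directly into \eqref{E:bigeqn}. The four bracketed expressions collapse to $-\delta\Omega_{2}$, $-\delta\Omega_{2}$, $-\delta\Omega_{1}$, and $-\delta\Omega_{3}$ respectively, so the equation becomes
\[
\delta^{2}\bigl(\Omega_{2}^{2}-\Omega_{1}\Omega_{3}\bigr)=0.
\]
Since $\delta>0$, this is equivalent to $\Omega_{2}^{2}=\Omega_{1}\Omega_{3}$.

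Plugging in the simplified expressions for $\Omega_{1},\Omega_{2},\Omega_{3}$, the common factor $\omega^{4}$ cancels and the equation becomes
\[
(\cosh\omega+\cos\omega)^{2}=(\sinh\omega+\sin\omega)(\sinh\omega-\sin\omega)=\sinh^{2}\omega-\sin^{2}\omega.
\]
Expanding the left-hand side and using $\cosh^{2}\omega-\sinh^{2}\omega=1$ and $\cos^{2}\omega+\sin^{2}\omega=1$ yields $2+2\cosh\omega\cos\omega=0$, i.e.\ $\cosh\omega\cos\omega=-1$.

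Finally, since $\delta=\omega^{-4}$, the largest positive root $\delta$ of \eqref{E:bigeqn} corresponds to the \emph{smallest} positive root $\omega=\gamma_{0}$ of $\cosh\omega\cos\omega=-1$; Theorem~\ref{T:quadratic} then gives $\|V^{2}\|=\sqrt{\tau^{2}+\delta}=\sqrt{\delta}=\gamma_{0}^{-2}$. A quick numerical check (the smallest positive root of $\cosh\gamma\cos\gamma=-1$ lies near $1.875$) confirms $\|V^{2}\|\approx 0.2844$. The main obstacle is simply bookkeeping: making sure that in the collapse $\sigma=\tau=0$ each of the four bracketed factors in \eqref{E:bigeqn} is correctly identified, and that one does not confuse largest $\delta$ with largest $\omega$ at the end.
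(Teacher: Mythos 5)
Your proposal is correct and follows essentially the same route as the paper's proof: specialise \eqref{E:omegaq} to get $\omega_1=\omega_2=\delta^{-1/4}$, reduce \eqref{E:bigeqn} to $\Omega_2^2-\Omega_1\Omega_3=0$, and simplify via the identities $\cosh^2-\sinh^2=1$ and $\cos^2+\sin^2=1$ to obtain $\cosh\omega\cos\omega=-1$. Your explicit remark that the largest $\delta$ corresponds to the smallest positive root $\gamma_0$ is a detail the paper leaves implicit, but the argument is the same.
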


\begin{proof}
When $\sigma=\tau=0$, equation \eqref{E:bigeqn} reduces to
$\Omega_2^2-\Omega_1\Omega_3=0$, i.e. 
\[
\Bigl(\omega_1^2\cosh\omega_1+\omega_2^2\cos\omega_2\Bigr)^2
-\Bigl(\omega_1\sinh\omega_1+\omega_2\sin\omega_2\Bigr)\Bigl(\omega_1^3\sinh\omega_1-\omega_2^3\sin\omega_2\Bigr)=0.
\]
Also the two equations \eqref{E:omegaq} give 
$\omega_1=\omega_2=\delta^{-1/4}$.
Substituting  into the previous equation and simplifying,
we obtain 
\[
\cosh(\delta^{-1/4})\cos(\delta^{-1/4})=-1.
\]
By Theorem~\ref{T:quadratic}, we have $\|V^2\|=\sqrt{\delta}$
where $\delta$ is the largest positive solution to this last equation.
The result follows.
\end{proof}

\begin{remark}
This result has been known for a long time.
According to Thorpe \cite{Th98}, 
the result was known to Halmos, who learned of it from A.~Brown.
The earliest reference that we have been able to track down is\ \cite[Table~1]{Ho73}.
\end{remark}

For $(\sigma,\tau)\ne(0,0)$, though we have not been able to make any further progress toward an analytic solution,
the result of Theorem~\ref{T:quadratic} can be used to compute $\|V^2+\sigma V+\tau I\|$ numerically.
The implementation is more complicated than in the case of linear polynomials,
because Theorem~\ref{T:quadratic} is expressed in terms of `the largest positive solution to \eqref{E:bigeqn}',
rather than in terms of the unique solution in some given interval, as was the case in Theorem~\ref{T:linear}.
Moreover, the equation \eqref{E:bigeqn} may have infinitely many solutions, or possibly no solutions at all. 

Since $\|p(V)\|\le \|V^2\|+|\sigma|\|V\|+|\tau|\|I\|<1+|\sigma|+|\tau|$, 
every positive solution $\delta$ of \eqref{E:bigeqn} must satisfy
$\delta< (1+|\sigma|+|\tau|)^2-\tau^2$. 
Further, if there are infinitely many solutions, then they can only accumulate at $0$ (because they correspond to 
eigenvalues of the compact operator $p(V)^*p(V)-\tau^2I$). 
So our strategy is to perform Newton's method a large number times, 
starting from a spread of points in $(0, (1+|\sigma|+|\tau|)^2-\tau^2)$, 
and then conserving the largest root found. 
If no root is found, then we consider that there is no root, and so $\|V^2+\sigma V+\tau I\|=|\tau|$.

Figure~\ref{F:contour3} is a contour plot of the function $(\sigma,\tau)\mapsto\|V^2+\sigma V+\tau I\|$,
computed using Theorem~\ref{T:quadratic}.

\begin{figure}[ht]
\begin{center}
\includegraphics[scale=0.18, trim= 0 100 0 50]{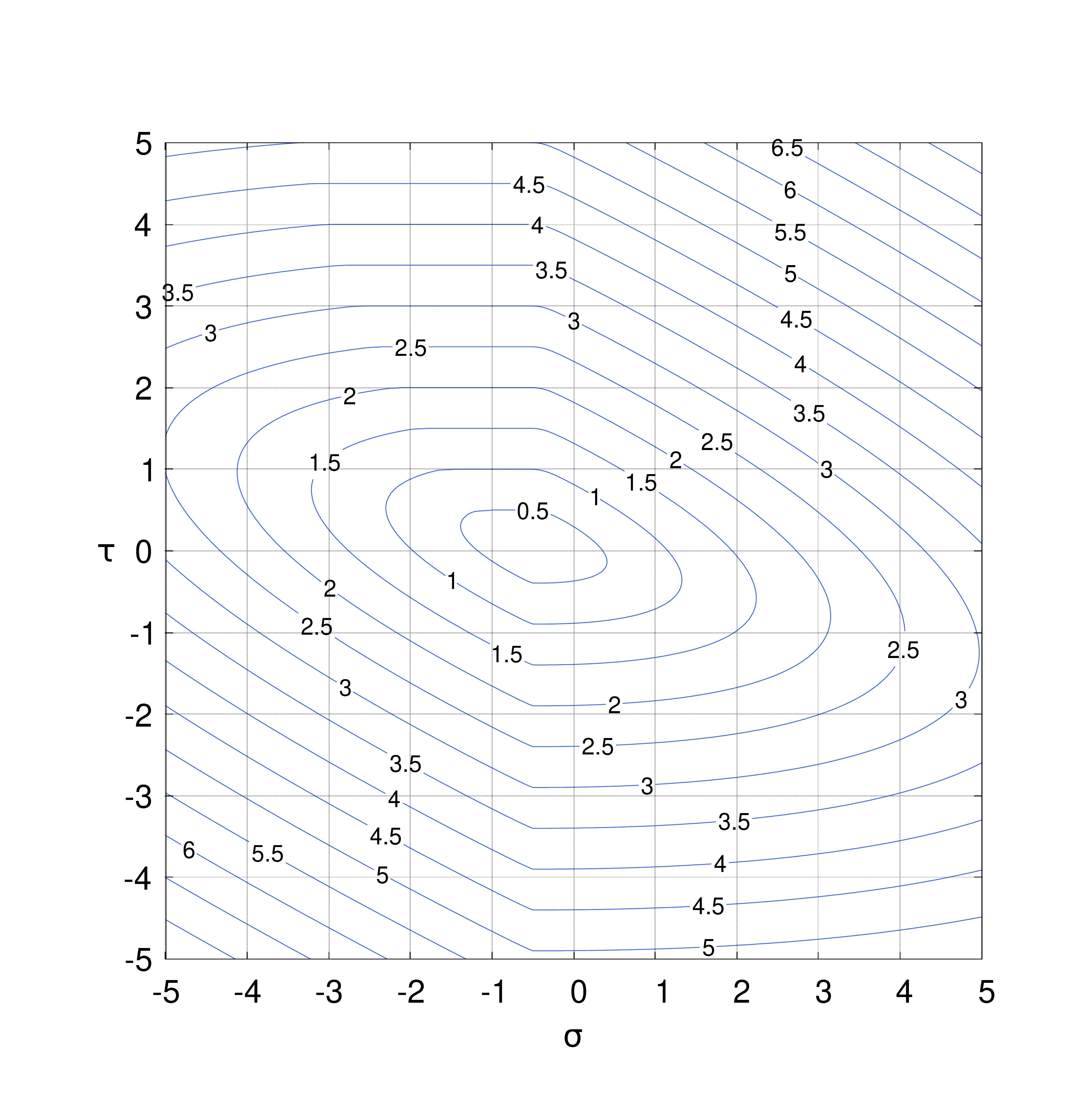}
\caption{Contour plot of the function $(\sigma,\tau)\mapsto\|V^2+\sigma V+\tau I\|$}\label{F:contour3}
\end{center}
\end{figure}

Just as in the case of linear polynomials, it is also instructive to consider $\|I+\xi V+\eta V^2\|$.
Figure~\ref{F:contour4} is a colour plot of this norm as a function of $(\xi,\eta)$.

\begin{figure}[ht]
\begin{center}
\includegraphics[scale=0.09, trim= 0 100 0 50]{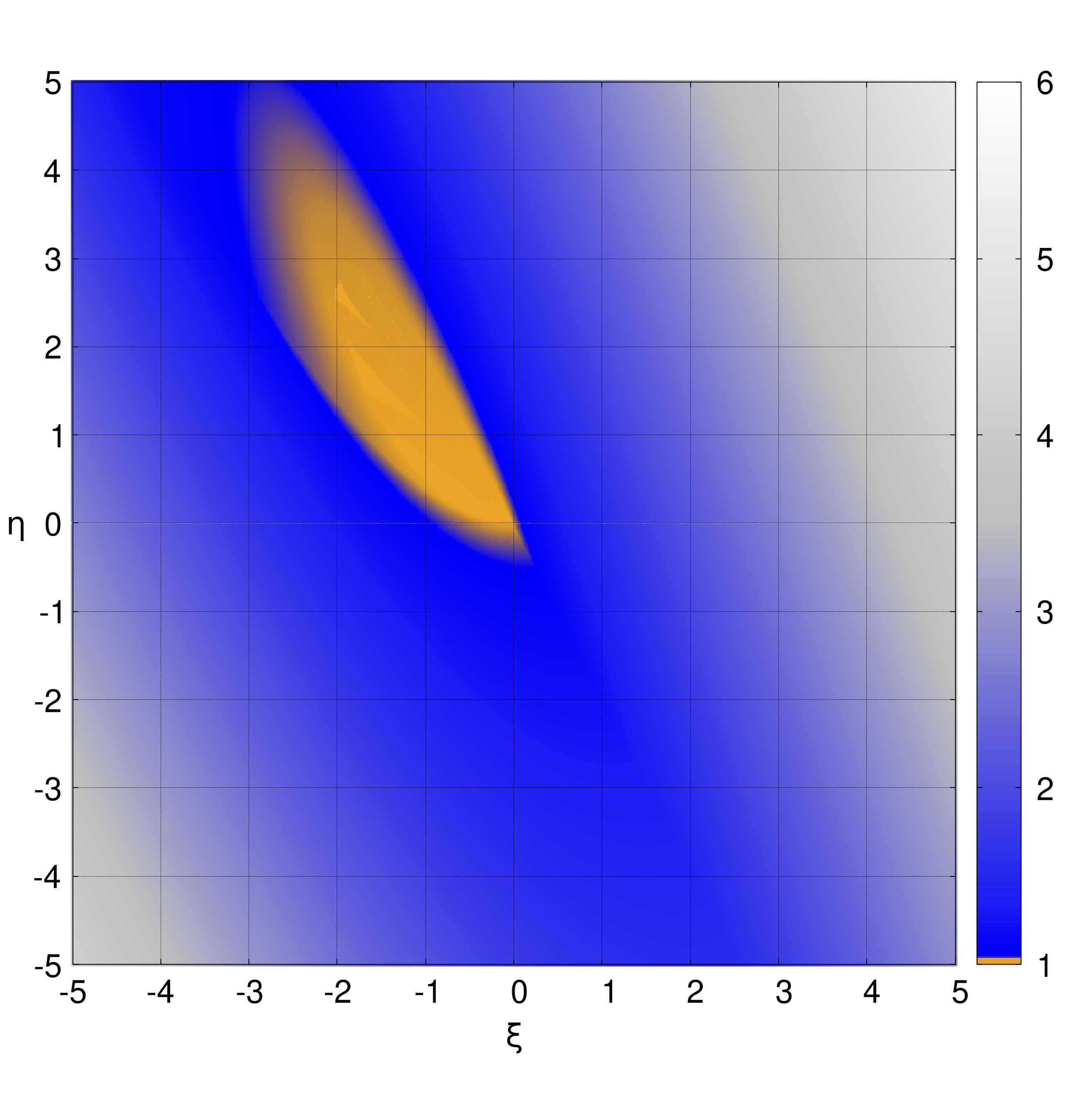}
\caption{Colour plot of the function $(\xi,\eta)\mapsto\|I+\xi V+\eta V^2\|$}\label{F:contour4}
\end{center}
\end{figure}

Of course, we always have $\|I+\xi V+\eta V^2\|\ge1$, 
since the norm is at least as large as the spectral radius.
Figure~\ref{F:contour4} suggests that there is a region of pairs $(\xi,\eta)$ around $(-1,1)$
where $\|I+\xi V+\eta V^2\|=1$.  The following result proves this rigorously.

\begin{theorem}\label{T:flatnorm}
Suppose that $\xi,\eta\in\RR$ satisfy 
\begin{equation}\label{E:flatnorm}
\left\{
\begin{aligned}
&\xi\le 0,\\
&4\eta^2/\pi^2-2\eta+\xi^2\le 0,\\
&(4\xi/\pi^2-1)\eta^2+(\xi^2-2\xi)\eta+\xi^3\ge0.
\end{aligned}\right.
\end{equation}
Then $\|I+\xi V+\eta V^2\|=1$.
In particular, we have $\|I-V+V^2\|=1$.
\end{theorem}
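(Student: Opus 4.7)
The plan is to invoke Theorem~\ref{T:quadratic}. The second inequality in (\ref{E:flatnorm}) reads $\xi^2 + 4\eta^2/\pi^2 \le 2\eta$, which forces $\eta \ge 0$ and collapses to $\xi = 0$ when $\eta = 0$, yielding the trivial conclusion $\|I\|=1$. Hence I assume $\eta > 0$ and factor $I + \xi V + \eta V^2 = \eta(V^2 + \sigma V + \tau I)$ with $\sigma := \xi/\eta \le 0$ and $\tau := 1/\eta > 0$. Since $V$ is quasi-nilpotent, $\|I + \xi V + \eta V^2\| \ge |p(0)| = 1$ automatically, so the task reduces to proving $\|V^2 + \sigma V + \tau I\| \le \tau$, which by Theorem~\ref{T:quadratic} is equivalent to showing that equation (\ref{E:bigeqn}) admits no positive solution $\delta$.

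Under this change of variables, constraint (ii) becomes $c^2 := 2\tau - \sigma^2 \ge 4/\pi^2$, and constraint (iii), after clearing the $\tau^3$ denominator, becomes $\sigma^3 + \sigma^2 + 4\sigma/\pi^2 \ge (1 + 2\sigma)\tau$. From (\ref{E:omegaq}) one then reads off, for every $\delta > 0$, the identities $\omega_1^2 \omega_2^2 = 1/\delta$ and $\omega_1^2 - \omega_2^2 = c^2/\delta$, together with the closed form $\omega_2^2 = 2/(c^2 + \sqrt{c^4 + 4\delta})$. In particular $\omega_2^2 < 1/c^2 \le \pi^2/4$, so $\omega_2 \in (0, \pi/2)$ and both $\cos\omega_2$ and $\sin\omega_2$ are strictly positive. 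Combined with $\omega_1 > \omega_2$ and $\sinh\omega_1 > \omega_1$, this shows that each of $\Omega_0, \Omega_1, \Omega_2, \Omega_3$ in (\ref{E:Omega}) is strictly positive for every $\delta > 0$.

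With that positivity in hand, the strategy is to regroup the left-hand side of (\ref{E:bigeqn}) as a combination of products of the $\Omega_j$ that is provably signed under the three constraints, strictly so whenever $\delta > 0$. Constraint (i) $\sigma \le 0$ enters through the sign of the factors involving $\sigma$ alone (for instance it controls the sign of $-\sigma(\sigma^2 - 2\tau) = \sigma c^2$); constraint (iii) is expected to appear as the non-negativity of a residual quadratic form that emerges after eliminating $\Omega_2$ and $\Omega_3$ in favor of $\omega_j^2$-multiples of $\Omega_0$ and $\Omega_1$ using the identities extracted from (\ref{E:omegaq}). Failure of (\ref{E:bigeqn}) for all $\delta > 0$ then gives $\|V^2 + \sigma V + \tau I\| = \tau$ as required.

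The main obstacle is purely algebraic: the left-hand side of (\ref{E:bigeqn}) expands into many terms of mixed sign, and discovering the regrouping that converts all three inequalities in (\ref{E:flatnorm}) into transparent positivity is the crux of the proof. A useful consistency check along the way is the special point $(\xi, \eta) = (-1, 1)$, for which the three inequalities evaluate to $-1 \le 0$, $4/\pi^2 - 1 \le 0$ and $1 - 4/\pi^2 \ge 0$ respectively; once the general argument is in place, it specializes immediately to the stated corollary $\|I - V + V^2\| = 1$.
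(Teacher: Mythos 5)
There is a genuine gap: your argument is a plan whose decisive step is never carried out. The reduction via Theorem~\ref{T:quadratic} to ``equation \eqref{E:bigeqn} has no positive solution $\delta$'' is legitimate, and your preparatory observations are correct (the change of variables $\sigma=\xi/\eta$, $\tau=1/\eta$, the identity $c^2:=2\tau-\sigma^2\ge 4/\pi^2$, the bound $\omega_2<\pi/2$, and the strict positivity of all four $\Omega_j$). But positivity of the $\Omega_j$ by itself proves nothing about \eqref{E:bigeqn}, whose left-hand side is a quadratic form in $\Omega_0,\dots,\Omega_3$ with coefficients of both signs. You yourself identify ``discovering the regrouping that converts all three inequalities in \eqref{E:flatnorm} into transparent positivity'' as ``the crux of the proof'' and ``the main obstacle'' --- and then do not supply it. No candidate regrouping is exhibited, no sign analysis of the cross terms (e.g.\ $\Omega_0\Omega_3$, $\Omega_1\Omega_2$, whose coefficients involve $\delta$ itself) is attempted, and it is far from clear that such a regrouping exists: the authors explicitly remark that for $(\sigma,\tau)\ne(0,0)$ they could make no analytic progress with \eqref{E:bigeqn}, and their own proof of Theorem~\ref{T:flatnorm} deliberately avoids that equation altogether. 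As written, the proposal establishes only the easy inequality $\|I+\xi V+\eta V^2\|\ge 1$ and reformulates the hard one; it does not prove it.

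For contrast, the paper's proof is elementary and direct: writing $g=V^2f$ (so $g''=f$ and $g(0)=g'(0)=0$), it expands $\int_0^1|g''+\xi g'+\eta g|^2$, integrates by parts to isolate boundary terms at $x=1$, controls $\int_0^1|g|^2$ by $(4/\pi^2)\int_0^1|g'|^2$ using \eqref{E:Halmos} and $|g(1)|^2$ by $\int_0^1|g'|^2$ via Cauchy--Schwarz, and then applies Lemma~\ref{L:completesquare} to the residual quadratic form in $g(1),g'(1)$; the three inequalities \eqref{E:flatnorm} are exactly the resulting sufficient conditions. If you want to salvage your route, you would need to actually produce the signed decomposition of \eqref{E:bigeqn} valid for all $\delta>0$ under \eqref{E:flatnorm}; absent that, the variational argument is the viable path.
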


The left-hand  side of Figure~\ref{F:flatnorm} illustrates the range of pairs $(\xi,\eta)$
satisfying the inequalities \eqref{E:flatnorm}.
The right-hand side of Figure~\ref{F:flatnorm} superimposes this set on 
the colour plot of the function $(\xi,\eta)\mapsto\|I +\xi V+\eta V^2\|$
previously obtained.

\begin{figure}[ht]
\begin{minipage}{0.49\linewidth}
\begin{center}
\includegraphics[scale=0.4, trim=0 330 100 10, clip]{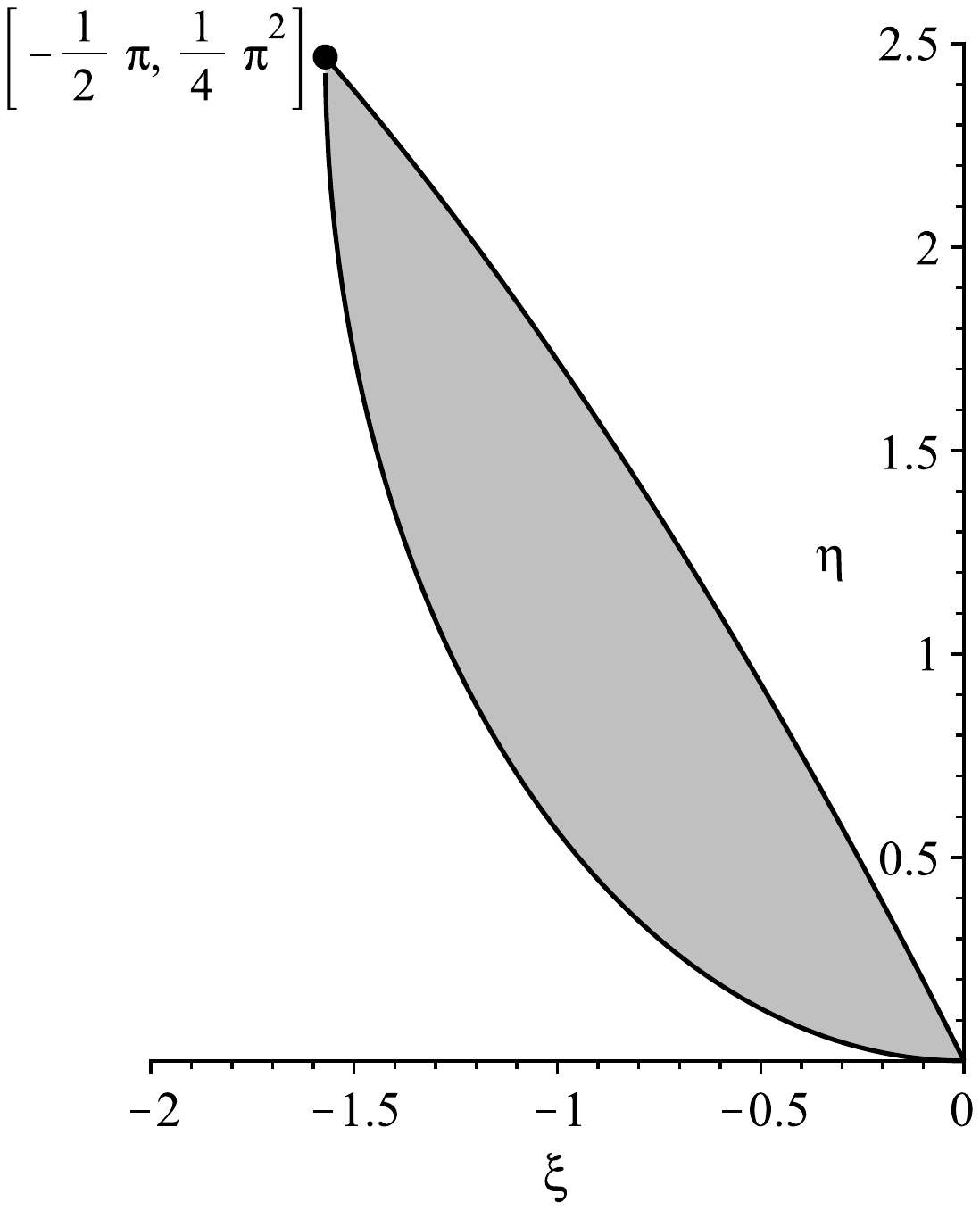}
\end{center}
\end{minipage}
\begin{minipage}{0.49\linewidth}
\begin{center}
\includegraphics[scale=0.07, trim= 0 120 0 50]{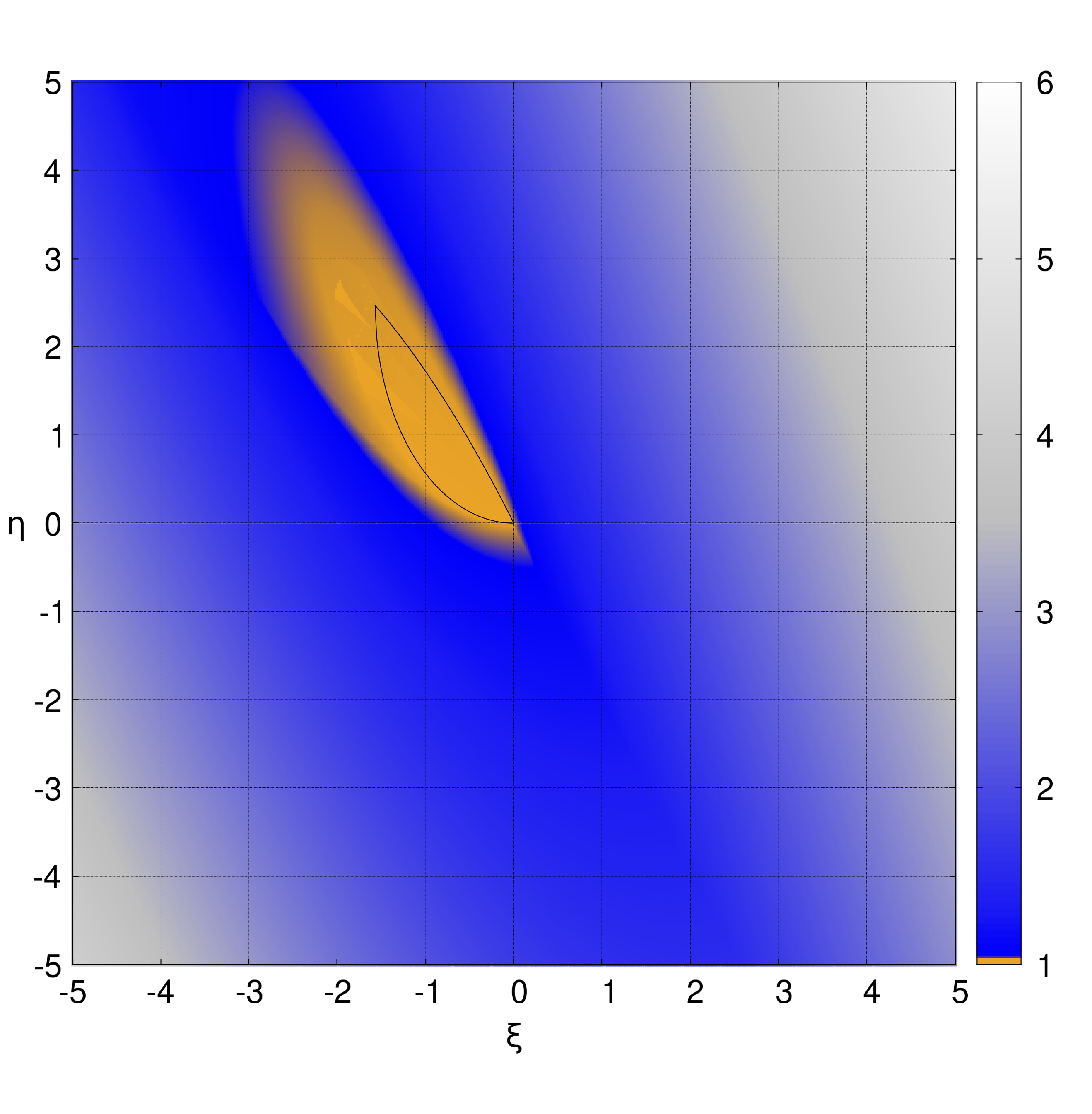}
\end{center}
\end{minipage}
\caption{Range of pairs ($\xi,\eta$) satisfying \eqref{E:flatnorm}} 
\label{F:flatnorm}
\end{figure}

\begin{remarks}
(1) As mentioned in \S\ref{S:linear}, it was conjectured in \cite{LT10} that $\|p(V)\|>1$ whenever $p(z)$
is a non-constant polynomial with $p(0)=1$. Theorem~\ref{T:flatnorm} disproves this conjecture.
It is not the first such example: ter Elst and Zem\'anek \cite{tZ18}, using a different method, showed that 
$\|1-bV+bV^2\|=1$ for all $b\in[0,1/8]$. However, they were unable to show that this equality extends to $b=1$
(see \cite[Remark~3.3]{tZ18}). Not only does our theorem establish this, but it provides a whole open
region of coefficients $(\xi,\eta)$ around $(-1,1)$ for which $\|I+\xi V+\eta V^2\|=1$.

(2) As remarked in \cite{tZ18}, the set of polynomials $p(z)$ with the property that $\|p(V)\|=p(0)=1$ is both a convex set and a multiplicative semigroup. Thus, the quadratic polynomials that arise in Theorem~\ref{T:flatnorm} can be used to generate a much larger set of polynomials $p$ with $\|p(V)\|=p(0)=1$.
\end{remarks}

To prove Theorem~\ref{T:flatnorm}, we make use of the following elementary  lemma.

\begin{lemma}\label{L:completesquare}
Let $r,s,t\in\RR$. The following are equivalent:
\begin{enumerate}[label=\rm({\alph*})]
\item $r|z|^2+s|w|^2+t (z\overline{w}+\overline{z}w)\ge0$ for all $z,w\in\CC$;
\item $r,s\ge0$ and $rs\ge t^2$.
\end{enumerate}
\end{lemma}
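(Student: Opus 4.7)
The plan is to recognise the expression
\[
Q(z,w):=r|z|^2+s|w|^2+t(z\overline{w}+\overline{z}w)
\]
as a Hermitian quadratic form on $\CC^2$. Since $z\overline w+\overline z w=2\Re(z\overline w)$, this form is the one associated with the real symmetric matrix $\bigl(\begin{smallmatrix}r & t\\ t & s\end{smallmatrix}\bigr)$, and the equivalence (a)$\Leftrightarrow$(b) is then just the classical criterion that a $2\times2$ symmetric matrix is positive semidefinite if and only if its diagonal entries and its determinant are non-negative. However, since the paper is keeping the exposition elementary, I would give a direct self-contained argument by completing the square.

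For (b)$\Rightarrow$(a), I would split according to whether $r$ vanishes. If $r>0$, one checks by direct expansion that
\[
Q(z,w)=r\Bigl|z+\frac{t}{r}w\Bigr|^2+\frac{rs-t^2}{r}\,|w|^2,
\]
and both terms are non-negative under the hypothesis. If $r=0$, then $rs\ge t^2$ forces $t=0$, and $Q(z,w)=s|w|^2\ge0$ trivially.

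For (a)$\Rightarrow$(b), setting $w=0$ yields $r|z|^2\ge0$ for all $z$, hence $r\ge0$; symmetrically $s\ge0$. To derive $rs\ge t^2$, I would restrict attention to real $z,w$, reducing the inequality to $rz^2+2tzw+sw^2\ge0$ for all $z,w\in\RR$. If $s>0$, setting $w=1$ and viewing the left-hand side as a quadratic in $z$ forces its discriminant $4t^2-4rs$ to be $\le0$, giving $rs\ge t^2$. If $s=0$, the inequality $rz^2+2tzw\ge0$ for all real $z,w$ immediately forces $t=0$ (take $z=1$ and let $w$ have either sign), and again $rs\ge t^2$.

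There is no substantive obstacle here: the lemma is elementary, and the only minor subtlety is the degeneracy when $r$ or $s$ vanishes, which is dispatched in a single line in each direction.
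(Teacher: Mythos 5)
Your proof is correct and rests on the same idea as the paper's: completing the square in the non-degenerate case and disposing of the degenerate cases ($r=0$ or $s=0$) by noting they force $t=0$. The paper's version is merely more compressed, giving the identity $r|z|^2+s|w|^2+t(z\overline{w}+\overline{z}w)=\frac{1}{r}\bigl(|rz+tw|^2+(rs-t^2)|w|^2\bigr)$ for $r\ne0$ and remarking that both conditions reduce to $t=0$ when $r=s=0$.
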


\begin{proof}
For $r\ne0$, this becomes clear upon completing the square:
\[
r |z|^2+s|w|^2+t (z\overline{w}+\overline{z}w)
=\frac{1}{r}\Bigl( |r z+t w|^2+(rs-t^2)|w|^2\Bigr).
\]
Likewise if $s\ne0$. Finally, if $r=s=0$, then both (a) and (b) are equivalent to the 
condition that $t=0$.
\end{proof}

\begin{proof}[Proof of Theorem~\ref{T:flatnorm}]
Fix $\xi,\eta\in\RR$. We seek conditions on $\xi,\eta$ ensuring that
\[
\|(I+\xi V+\eta V^2)f\|_2\le \|f\|_2^2 \quad(f\in C[0,1]).
\]
It suffices to consider $f\in C[0,1]$, since this is a dense subset of $L^2[0,1]$.

So let $f\in C[0,1]$, and set $g:=V^2f$. 
Then $g\in C^2[0,1]$ with $g''=f$ and $g(0)=g'(0)=0$.
We need to determine when
\begin{equation}\label{E:gineq}
\int_0^1|g''+\xi g'+\eta g|^2\le \int_0^1|g''|^2.
\end{equation}
Now, a calculation gives
\begin{align*}
&\int_0^1|g''+\xi g'+\eta g|^2\\
&=\int_0^1|g''|^2+\xi^2\int_0^1|g'|^2+\eta^2\int_0^1|g|^2+\xi\int_0^1(g''\overline{g}'+\overline{g}''g')
+\xi\eta\int_0^1 (g'\overline{g}+\overline{g}'g)+\eta\int_0^1(g''\overline{g}+\overline{g}''g)\\
&=\int_0^1|g''|^2+\xi^2\int_0^1|g'|^2+\eta^2\int_0^1|g|^2+\xi\int_0^1(g'\overline{g}')'+\xi\eta\int_0^1(g\overline{g})'+\eta\Bigl[g'\overline{g}+\overline{g}'g\Bigr]_0^1-2\eta\int_0^1|g'|^2\\
&=\int_0^1|g''|^2+(\xi^2-2\eta)\int_0^1|g'|^2+\eta^2\int_0^1|g|^2+\xi|g'(1)|^2+\xi\eta|g(1)|^2
+\eta\Bigl(g'(1)\overline{g}(1)+\overline{g}'(1)g(1)\Bigr).
\end{align*}
Thus condition \eqref{E:gineq} will be satisfied if and only if
\[
(\xi^2-2\eta)\int_0^1|g'|^2+\eta^2\int_0^1 |g|^2+\xi|g'(1)|^2+\xi\eta|g(1)|^2
+\eta\Bigl(g'(1)\overline{g}(1)+\overline{g}'(1)g(1)\Bigr)\le0.
\]
Now $g=V(g')$, and from \eqref{E:Halmos} we have $\|V\|=2/\pi$, so $\|g\|_2\le (2/\pi)\|g'\|_2$, i.e.,
\[
\int_0^1 |g|^2\le \frac{4}{\pi^2}\int_0^1|g'|^2.
\] 
Thus \eqref{E:gineq} will hold provided that
\[
\Bigl(2\eta-\xi^2-4\eta^2/\pi^2\Bigr)\int_0^1|g'|^2-\xi|g'(1)|^2-\xi\eta|g(1)|^2
-\eta\Bigl(g'(1)\overline{g}(1)+\overline{g}'(1)g(1)\Bigr)\ge0.
\]
Also, by the fundamental theorem of calculus and the Cauchy--Schwarz inequality, we have
\[
|g(1)|^2=|g(1)-g(0)|^2=\Bigl|\int_0^1 g'\Bigr|^2\le \int_0^1|g'|^2.
\]
Hence, provided that
\begin{equation}\label{E:cond1}
2\eta-\xi^2-4\eta^2/\pi^2\ge0,
\end{equation}
the inequality \eqref{E:gineq} will hold if
\[
\Bigl(2\eta-\xi^2-4\eta^2/\pi^2-\xi\eta\Bigr)|g(1)|^2-\xi|g'(1)|^2
-\eta\Bigl(g'(1)\overline{g}(1)+\overline{g}'(1)g(1)\Bigr)\ge0.
\]
By Lemma~\ref{L:completesquare},  for this last inequality to hold,
it suffices that 
\begin{equation}\label{E:cond2}
\left\{
\begin{aligned}
2\eta-\xi^2-4\eta^2/\pi^2-\xi\eta&\ge0,\\
-\xi&\ge0,\\
 (-\xi)\Bigl(2\eta-\xi^2-4\eta^2/\pi^2-\xi\eta\Bigr)&\ge \eta^2.
\end{aligned}
\right.
\end{equation}
Condition \eqref{E:cond1} implies that $\eta\ge0$, and  obviously the second condition in \eqref{E:cond2}
implies that $\xi\le0$. Therefore the first condition in \eqref{E:cond2} is an automatic consequence of 
\eqref{E:cond1}.
Thus \eqref{E:cond1} and \eqref{E:cond2} together are equivalent to the conditions 
\eqref{E:flatnorm}, and we have shown that if they hold, then so does
\eqref{E:gineq}
and consequently $\|I+\xi V+\eta V^2\|\le1$. 
\end{proof}


\section{Numerical range and Crouzeix's conjecture}\label{S:nr}

Let $T$ be a bounded linear operator on a complex Hilbert space $H$.
The \emph{numerical range} of $T$ is defined by
\[
W(T):=\Bigl\{\langle Tx,x\rangle : x\in H,\, \|x\|=1\Bigr\}.
\]
It is a bounded convex set whose closure  contains the spectrum of $T$. 
If further $T$ is a compact operator, then $W(T)$ is compact.

According to a celebrated conjecture of Crouzeix \cite{Cr04}, 
for every operator $T$ and every polynomial $p(z)$ we have
\begin{equation}\label{E:Crouzeix}
\|p(T)\|\le 2\sup_{z\in W(T)}|p(z)|.
\end{equation}
The issue here is the constant~$2$. Crouzeix and Palencia \cite{CP17} showed that
inequality \eqref{E:Crouzeix} holds if one replaces $2$ by the slightly larger constant $(1+\sqrt{2})$.

As mentioned in the introduction, this paper was prompted in part by a question
posed to the first author by Felix Schwenninger as to whether the Crouzeix conjecture
holds for the Volterra operator~$V$. 
To try to answer this, the first step is to identify
the numerical range of $V$. 

\begin{theorem}\label{T:nr}
$W(V)$ is the convex compact set bounded by the vertical segment $[-i/2\pi,\,i/2\pi]$ and the curves
\[
t\mapsto \Bigl(\frac{1-\cos t}{t^2}\Bigr) \pm i\Bigl(\frac{t-\sin t}{t^2}\Bigr) \quad(t\in[0,2\pi]).
\]
\end{theorem}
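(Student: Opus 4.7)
I would begin from the fact that, since $V$ is compact, $\overline{W(V)}$ is a compact convex subset of $\CC$ and is therefore determined by its support function
\[
h(\theta):=\sup_{z\in W(V)}\Re(e^{-i\theta}z)=\lambda_{\max}(R_\theta),\quad R_\theta:=\tfrac12(e^{-i\theta}V+e^{i\theta}V^*),
\]
where $R_\theta$ is a compact self-adjoint operator. Moreover, the extremal boundary point of $W(V)$ with outward normal $e^{i\theta}$ is $\langle Vf_\theta,f_\theta\rangle$ for any unit eigenvector $f_\theta$ of $R_\theta$ associated with $\lambda_{\max}$. The plan is thus (i) to solve the eigenvalue problem for $R_\theta$ explicitly, obtaining both $h(\theta)$ and $f_\theta$; (ii) to compute the boundary point $\langle Vf_\theta,f_\theta\rangle$ in closed form; and (iii) to identify the portion of the boundary coming from directions where $R_\theta$ has a degenerate top eigenspace, which will produce the vertical segment.

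\textbf{The ODE reduction.} For step (i), I would apply the Halmos method: for $\lambda\ne 0$, differentiating the integral identity $R_\theta f=\lambda f$ once gives the first-order ODE $\lambda f'+i\sin\theta\cdot f=0$, whose solutions are exponentials $f(x)=Ce^{-ix\sin\theta/\lambda}$. Plugging back and comparing values at $x=0$ and $x=1$ reduces the problem to the compatibility condition $e^{2i\theta}=e^{i\sin\theta/\lambda}$, giving the eigenvalues of $R_\theta$ as $\bigl\{\sin\theta/(2\theta+2\pi k):k\in\ZZ\bigr\}$. For $\theta\in(0,\pi)$ the largest one is $\sin\theta/(2\theta)$ (at $k=0$), with normalized eigenfunction $f_\theta(x)=e^{-2i\theta x}$. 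A direct integration then yields
\[
\langle Vf_\theta,f_\theta\rangle=\frac{1-\cos(2\theta)}{(2\theta)^2}+i\,\frac{2\theta-\sin(2\theta)}{(2\theta)^2},
\]
which is precisely the upper curve in the theorem, with $t=2\theta\in(0,2\pi)$. Using the symmetry $W(V)=\overline{W(V)}$ coming from the unitary equivalence $V^*=U^*VU$ with $Uf(x)=f(1-x)$ (already noted in \S\ref{S:linear}), the lower curve is obtained by reflection across the real axis.

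\textbf{The vertical segment and closing the region.} For step (iii), the key observation is that $\Re V=(V+V^*)/2$ is the rank-one positive operator $f\mapsto\tfrac12\langle f,\mathbf{1}\rangle\mathbf{1}$, where $\mathbf{1}$ denotes the constant function $1$. Consequently $\Re\langle Vf,f\rangle=\tfrac12|\langle f,\mathbf{1}\rangle|^2\ge 0$, forcing $\overline{W(V)}\subseteq\{\Re z\ge0\}$. The points $\pm i/(2\pi)$ arise as the limits of the parametric curves at $t=\pm 2\pi$; equivalently, they are $\langle Vf,f\rangle$ for $f(x)=e^{\mp 2\pi ix}$, which is orthogonal to $\mathbf{1}$, hence they lie in $\overline{W(V)}$. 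Convexity then forces the whole segment $[-i/(2\pi),i/(2\pi)]$ into $\overline{W(V)}$, and since this segment sits on the line $\Re z=0$ it must be part of the boundary. To conclude, I would check directly that the two curves together with the segment enclose a closed convex region whose support function coincides with $h(\theta)$ in every direction, so it equals $\overline{W(V)}$.

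\textbf{Main obstacle.} The most delicate point is to be sure that the ODE reduction really exhausts the spectrum of $R_\theta$, so that $\sin\theta/(2\theta)$ is truly the maximum eigenvalue and not just one among many. A related subtle issue is the transition at the directions $\theta=\pm\pi$: here the top eigenspace of $R_\theta$ is the infinite-dimensional subspace $\mathbf{1}^\perp$, and one must verify that the two parametric curves meet the flat edge tangentially at $\pm i/(2\pi)$, so that the pieces fit together into a single convex boundary.
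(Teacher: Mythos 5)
Your proposal is correct, and it takes a genuinely different route from the paper. The paper obtains the support function $h(\theta)=\sin\theta/(2\theta)$ indirectly, via Lumer's formula $\sup\{\Re z:z\in W(T)\}=\lim_{r\to0^+}(\|I+rT\|-1)/r$ applied to $T=e^{i\theta}V$ together with the norm asymptotics of Theorem~\ref{T:nearI}, and then cites \cite[p.105]{KT18} for the identification of the convex body having that support function. You instead compute $h(\theta)$ as the top eigenvalue of the compact self-adjoint operator $\Re(e^{-i\theta}V)$ by the Halmos method; here the eigenvalue equation reduces to a \emph{first-order} ODE, so the analysis is considerably lighter than for $\|V\|$ itself, and your list of eigenvalues $\sin\theta/(2\theta+2\pi k)$ and the compatibility check are correct (differentiating shows the defect $R_\theta f-\lambda f$ is constant, so the single endpoint condition suffices, and the ODE reduction does exhaust the nonzero spectrum since any eigenfunction for $\lambda\ne0$ is automatically smooth). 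Your approach buys two things the paper's does not: the explicit extremal vectors $f_\theta(x)=e^{-2i\theta x}$, from which the boundary parametrization drops out directly as $\langle Vf_\theta,f_\theta\rangle$ without inverting the support function, and a transparent explanation of the flat edge via the rank-one positivity of $\Re V$. Two small points you should make explicit when writing it up: first, for $\theta\in(0,\pi)$ the top eigenvalue $\sin\theta/(2\theta)$ is \emph{simple} (the numbers $2\theta+2\pi k$ are pairwise distinct), which is what guarantees that the face of $W(V)$ in direction $\theta$ is the single point $\langle Vf_\theta,f_\theta\rangle$ rather than a segment; second, your closing step "the curves and the segment bound a region with support function $h$" is exactly the standard envelope identity, i.e.\ one checks that $e^{i\theta}\bigl(h(\theta)+ih'(\theta)\bigr)$ equals $\frac{1-\cos 2\theta}{4\theta^2}+i\frac{2\theta-\sin 2\theta}{4\theta^2}$, which is a short computation and does work out.
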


\begin{figure}[ht]
\begin{center}
\includegraphics[scale=0.4, trim=0 340 100 50, clip]{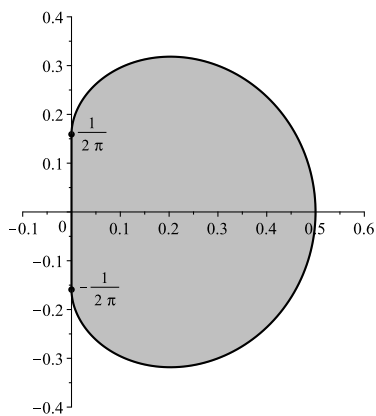}
\caption{Numerical range of $V$} 
\label{F:nr}
\end{center}
\end{figure}

\begin{remark}
This result is folklore. It appears in the middle of a discussion in \cite[p.113--114]{Ha82},
where it is attributed to A.~Brown. We sketch briefly how it may be obtained. 
\end{remark}

\begin{proof}
According to a well-known formula of Lumer \cite[Lemma~12]{Lu61},
if $T$ is a bounded operator on a Hilbert space, then
\[
\sup\Bigl\{\Re z: z\in W(T)\Bigr\}=\lim_{r\to0^+}\frac{\|I+rT\|-1}{r}.
\]
Applying this with $T=e^{i\theta}V$, and using Theorem~\ref{T:nearI}, we deduce that
\[
\sup\Bigr\{\Re z: z\in e^{i\theta}W(V)\Bigr\}=\frac{\sin\theta}{2\theta} \quad(\theta\in[-\pi,\pi]).
\]
This identifies the support function of $W(V)$. It turns out to be the same as the support function
of the set described in the statement of Theorem~\ref{T:nr}.
The details of this calculation can be found for example in \cite[p.105]{KT18}.
As both sets are convex and compact, they must be equal.
\end{proof}

To test whether the Crouzeix conjecture holds for $V$, 
we compute the ratio $\|p(V)\|/\max_{W(V)}|p|$ for various polynomials~$p$,
and check whether this is always bounded above by $2$. The denominator is relatively easy to compute. Indeed, by the maximum modulus principle, $\max_{W(V)}|p|$ is attained on the boundary of $W(V)$, for which we have an explicit parametrization, so this reduces to a one-dimensional maximization problem. The main challenge is to compute the numerator, $\|p(V)\|$. This leads us directly to the problem addressed in the preceding sections of this paper.

We have developed methods to compute $\|p(V)\|$ when $p$ is a linear or real-quadratic polynomial.
It is easy to see that $\|p(V)\|/\max_{W(V)}|p|\le 2$ when $p$ is a linear polynomial, so we shall concentrate
on the case when $p$ is real-quadratic. Though this is a very special case, it is not altogether unreasonable to hope that,
if there is a polynomial $p$ such that $\|p(V)\|/\max_{W(V)}|p|>2$, then there is real-quadratic one with this property.
Indeed, since $\|V^n\|\to0$ very rapidly with $n$, one might expect lower powers of $V$ to dominate in $p(V)$, and since $W(V)$
is symmetric about the $x$-axis, one might expect the same to be true of the roots of $p$. Of course, this is purely heuristic. 

\begin{figure}[ht]
\begin{center}
\includegraphics[scale=0.08, trim= 0 110 0 60]{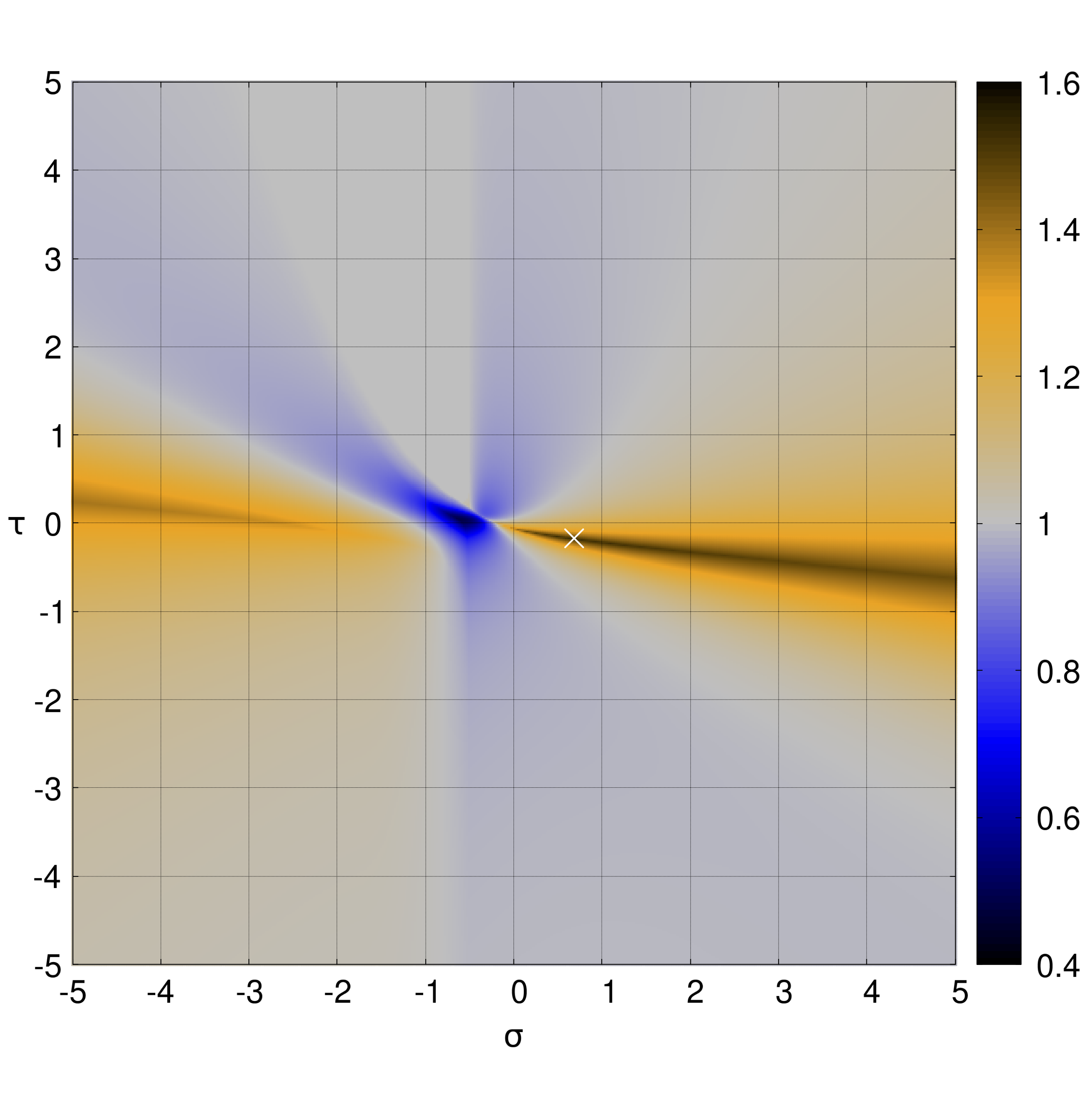}
\caption{Plot of $(\sigma,\tau)\mapsto\|V^2+\sigma V+\tau I\|/\max_{W(V)}|z^2+\sigma z+\tau|$} 
\label{F:Crouzeix}
\end{center}
\end{figure}

Figure~\ref{F:Crouzeix} illustrates the results of our computations.
The largest value that we have found for $\|V^2+\sigma V+\tau I\|/\max_{W(V)}|z^2+\sigma z+\tau|$
is $1.5278$, attained when $(\sigma,\tau)=(0.685,-0.167)$ (marked with a white cross on the figure). In this case,
$\|V^2+\sigma V+\tau I\|=0.6501$ and $\max_{W(V)}|z^2+\sigma z+\tau|=0.4255$.

If we factorize $p(z)=z^2+\sigma z+\tau$ as $p(z)=(z-\alpha)(z-\beta)$, 
then, since the coefficients $\sigma,\tau$ are both real,
either $\alpha,\beta$ are both real, say $(\alpha,\beta)=(x_1,x_2)$,
or they are complex conjugates of one another, say $(\alpha,\beta)=(a+ib,a-ib)$.
Figure~\ref{F:Crouzeix2} shows plots of the values of $\|p(V)\|/\max_{W(V)}|p(z)|$ where $p(z)$ is parametrized  either by $(x_1,x_2)$ (in the case of real roots) or by $(a,b)$ (in the case of complex conjugate roots).
In the second case, the numerical range of $V$ has been superimposed on the plot.
The maximum computed value of $\|p(V)\|/\max_{W(V)}|p(z)|$ (namely $1.5258$) is attained in the case of real roots, 
with $x_1=0.191$ and $x_2=-0.876$ (marked with a white cross on the figure).

\begin{figure}[ht]
\begin{minipage}{0.49\linewidth}
\begin{center}
\includegraphics[scale=0.07, trim=0 100 0 50, clip]{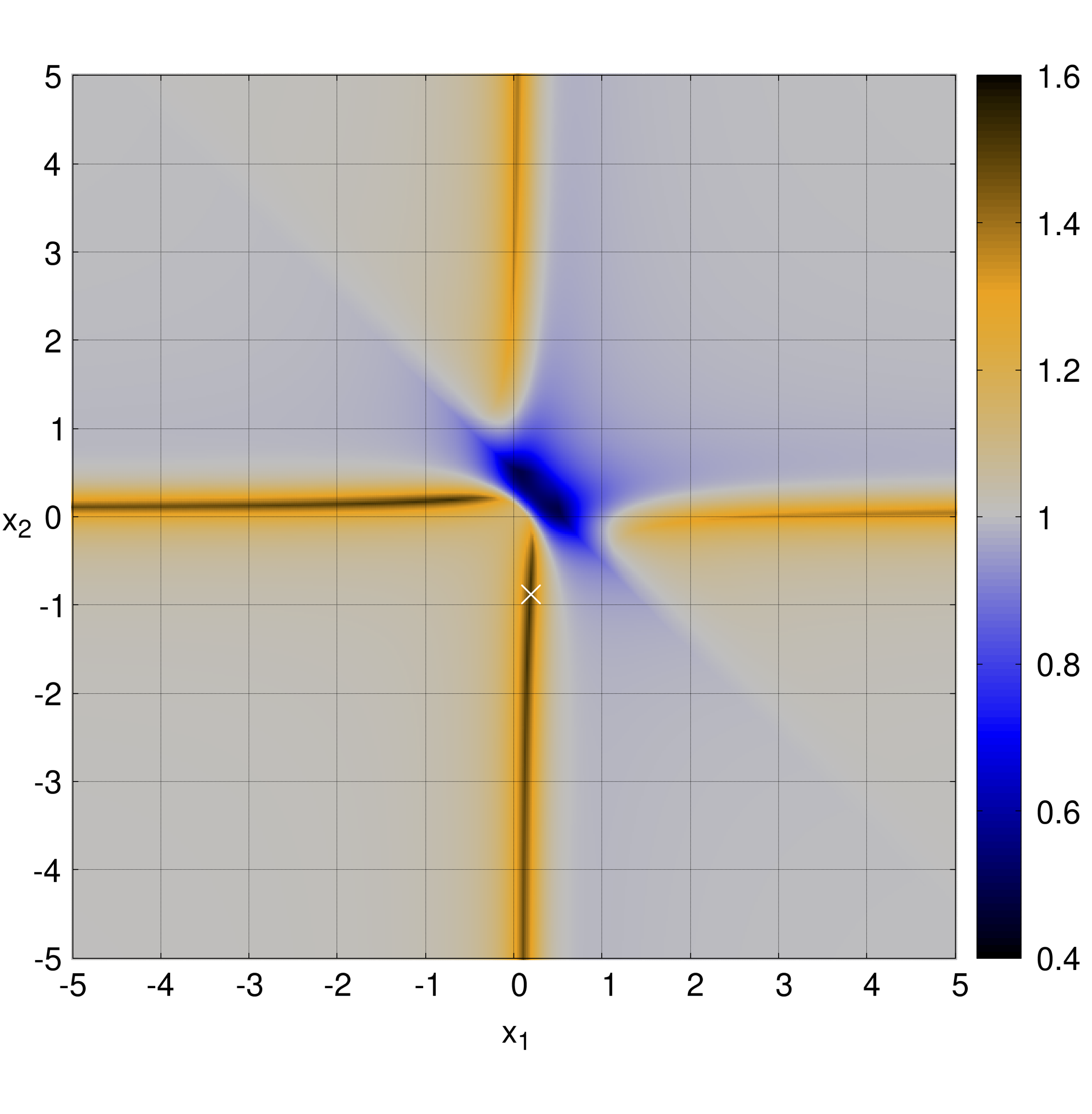}
\end{center}
\end{minipage}
\begin{minipage}{0.49\linewidth}
\begin{center}
\includegraphics[scale=0.075, trim= 0 100 0 50]{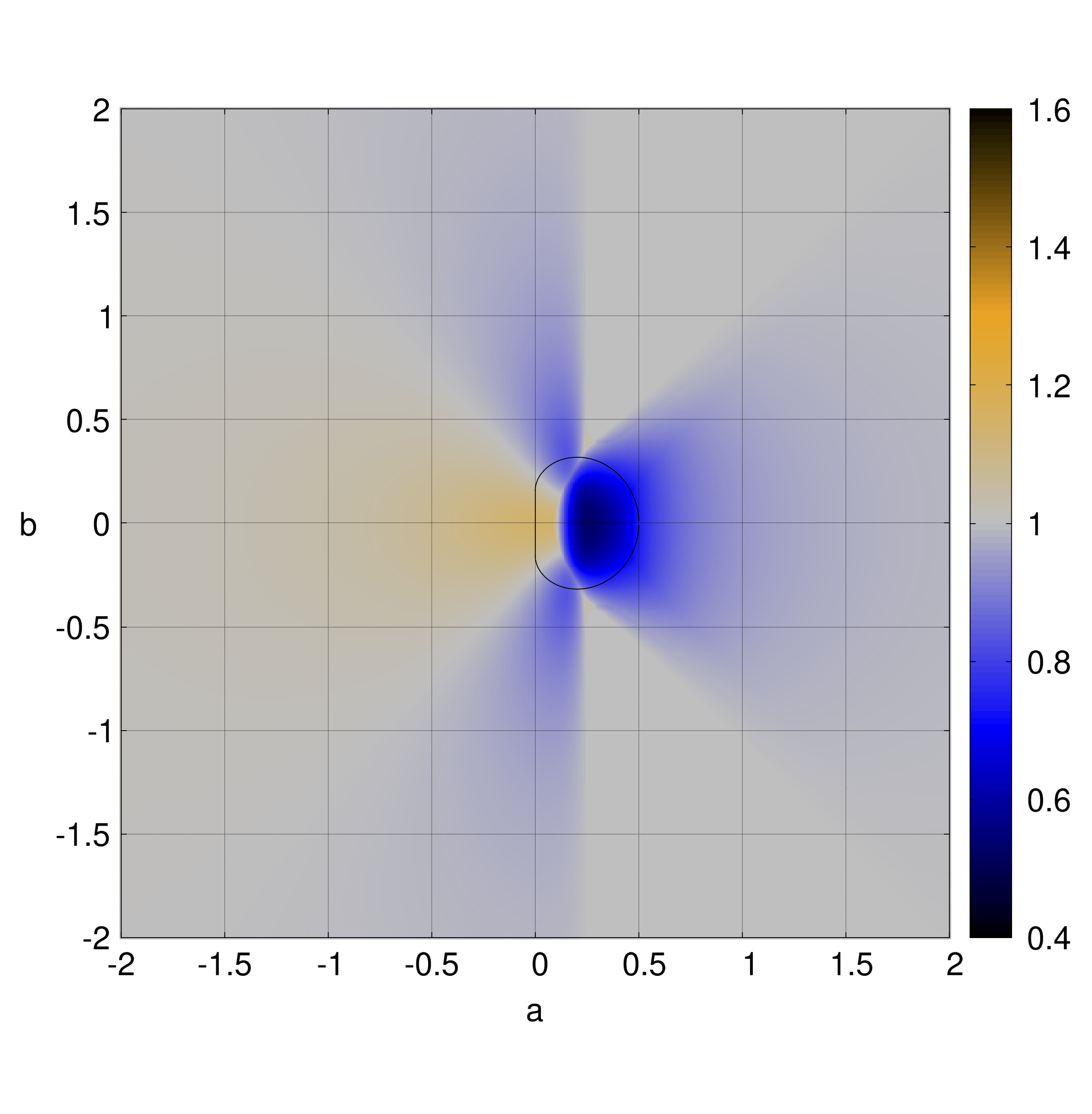}
\end{center}
\end{minipage}
\caption{Plot of $\|p(V)\|/\max_{W(V)}|p(z)|$, where $p$ is parametrized by real roots (left) and complex conjugate roots (right)} 
\label{F:Crouzeix2}
\end{figure}


\section{Conclusion}

Motivated in part by the problem of testing whether the Crouzeix conjecture holds for the Volterra operator $V$,
we have developed methods for computing the operator norm $\|p(V)\|$ when $p(z)$ is a  real-quadratic polynomial.
We obtain a result expressing $\|p(V)\|$ in terms of the largest root of a certain function. In particular, this allows us to recover the exact value of $\|V^2\|$, which was previously known. We also show that $\|I+\xi V+\eta V^2\|=1$ for $(\xi,\eta)$ in a certain neighborhood of $(-1,1)$, a fact that we believe to be new.
Finally, we have performed numerical tests which lend support to the belief that Crouzeix conjecture holds for $V$.
Our computations show that $\|p(V)\|\le 1.53\max_{W(V)}|p(z)|$ whenever $p(z)$ is a real-quadratic polynomial.

\section*{Acknowledgement}
The authors are grateful to the referee for drawing their attention to references \cite{KT18} and \cite{LT10}.

\bibliographystyle{amsplain}
\bibliography{biblist.bib}

\end{document}